
\documentclass[11pt,reqno]{amsart}

\usepackage{floatrow}

\usepackage{amsmath}
\usepackage{amssymb}
\usepackage[left=1in,top=1in,right=1in,bottom=1in]{geometry}
\usepackage{epsfig}
\usepackage[normalem]{ulem}
\usepackage[usenames,dvipsnames]{color}
\usepackage{todonotes}
\usepackage[colorlinks, citecolor=black, linkcolor=black, urlcolor=black]{hyperref}
\usepackage{bm}
\usepackage{algorithm}
\usepackage{algpseudocode}

\newenvironment{varalgorithm}[1]
  {\algorithm}
  {\endalgorithm}

\usepackage{enumitem}
\newcommand{\subscript}[2]{$#1 _ #2$}

\allowdisplaybreaks

\usepackage{hyperref}


\usepackage{tikz}
\usetikzlibrary{backgrounds}
\usetikzlibrary{intersections}

\newtheorem{theorem}{Theorem}[section]
\newtheorem{lemma}[theorem]{Lemma}
\newtheorem{proposition}[theorem]{Proposition}

\newcommand\eps{\varepsilon}
\newcommand{\E}{\mathbb E}

\newcommand{\Bin}{\mathrm{Bin}}

\newcommand{\Nn}{{\mathbb N}}

\newcommand{\scr}{\mathcal}
\newcommand{\mb}{\mathbb}

\newcommand{\ex}{\mathrm{ex}}

\theoremstyle{definition}
\newtheorem{remark}{Remark}

\def\ex{{\mathbb E}}

\def\W{{\mathcal W}}
\def\T{{\mathcal T}}
\setlength{\tabcolsep}{10pt}

\title[Hamiltonian Cycles in the Semi-Random Graph Process]{A Fully Adaptive Strategy for Hamiltonian Cycles in the Semi-Random Graph Process}
\author{Pu Gao}
\address{Department of Combinatorics and Optimization, University of Waterloo, Waterloo, Canada}
\email{pu.gao@uwaterloo.ca}

\author{Calum MacRury}
\address{Department of Computer Science, University of Toronto, Toronto, Canada}
\email{cmacrury@cs.toronto.edu}

\author{Pawe\l{} Pra\l{}at}
\address{Department of Mathematics, Toronto Metropolitan University, Toronto, Canada}
\email{pralat@ryerson.ca}

\date{}

\begin{document}

\maketitle

\begin{abstract}
The semi-random graph process is a single player game in which the player is initially presented an empty graph on $n$ vertices. In each round, a vertex $u$ is presented to the player independently and uniformly at random. The player then adaptively selects a vertex $v$, and adds the edge $uv$ to the graph. For a fixed monotone graph property, the objective of the player is to force the graph to satisfy this property with high probability in as few rounds as possible.

We focus on the problem of constructing a Hamiltonian cycle in as few rounds as possible. In particular, we present an adaptive strategy for the player which achieves it in $\alpha n$ rounds, where $\alpha < 2.01678$ is derived from the solution to some system of differential equations. We also show that the player cannot achieve the desired property in less than $\beta n$ rounds, where $\beta > 1.26575$. 
These results improve the previously best known bounds and, as a result, the gap between the upper and lower bounds is decreased from 1.39162 to 0.75102. 
\end{abstract}


\section{Introduction and Main Results}

\subsection{Definitions} 
In this paper, we consider the \textbf{semi-random graph process} suggested by Peleg Michaeli, introduced formally in~\cite{beneliezer2019semirandom}, and studied recently in~\cite{beneliezer2020fast,gao2020hamilton,gao2021perfect,behague2022,Burova2022, macrury_2022} that can be viewed as a ``one player game''. The process starts from $G_0$, the empty graph on the vertex set $[n]:=\{1,\ldots,n\}$ where $n \ge 1$. In each \textbf{step} $t$, a vertex $u_t$ is chosen uniformly at random from $[n]$. Then, the player (who is aware of graph $G_t$ and vertex $u_t$) must select a vertex $v_t$ and add the edge $u_tv_t$ to $G_t$ to form $G_{t+1}$. The goal of the player is to build a (multi)graph satisfying a given property $\scr{P}$ as quickly as possible. It is convenient to refer to $u_t$ as a {\bf square}, and $v_t$ as a {\bf circle} so every edge in $G_t$ joins a square with a circle. We say that $v_t$ is paired to $u_t$ in step $t$. Moreover, we say that vertex $x \in [n]$ is \textbf{covered} by the square $u_t$ arriving at round $t$,
provided $u_t = x$. The analogous definition extends to the circle $v_t$. Equivalently, we may view $G_t$ as a directed graph where each arc  directs from $u_t$ to $v_t$, and thus we may use $(u_t,v_t)$ to denote the edge added in step $t$. For this paper, it is easier to consider squares and circles for counting arguments.

A \textbf{strategy} $\scr{S}$ is defined by specifying for each $n \ge 1$, a sequence of functions $(f_{t})_{t=1}^{\infty}$, where for each $t \in \mb{N}$, $f_t(u_1,v_1,\ldots, u_{t-1},v_{t-1},u_t)$ is a distribution over $[n]$
which depends on the vertex $u_t$, and the history of the process up until step $t-1$. Then, $v_t$ is chosen according to this distribution. If $f_t$ is an atomic distribution, then $v_t$ is determined by $u_1,v_1, \ldots ,u_{t-1},v_{t-1},u_t$. We then denote $(G_{i}^{\scr{S}}(n))_{i=0}^{t}$ as
the sequence of random (multi)graphs obtained by following the strategy $\scr{S}$ for $t$ rounds; where we shorten $G_{t}^{\scr{S}}(n)$
to $G_t$ or $G_{t}(n)$ when clear. 


Suppose $\scr{P}$ is a monotonely increasing property. Given a strategy $\scr{S}$ and a constant $0<q<1$, let $\tau_{\scr{P}}(\scr{S},q,n)$ be the minimum $t \ge 0$ for which $\mb{P}[G_{t} \in \scr{P}] \ge q$,
where $\tau_{\scr{P}}(\scr{S},q,n):= \infty$ if no such $t$ exists. Define
\[
\tau_{\scr{P}}(q,n) = \inf_{ \scr{S}} \tau_{\scr{P}}( \scr{S},q,n),
\]
where the infimum is over all strategies on $[n]$. 
Observe that for each $n \ge 1$, if $0 \le q_{1} \le q_{2} \le 1$, then $\tau_{\scr{P}}(q_1,n) \le \tau_{\scr{P}}(q_2,n) $ as $\scr{P}$ is increasing. Thus, the function $q\rightarrow \limsup_{n\to\infty} \tau_{\scr{P}}(q,n)$ is non-decreasing,
and so the limit,
\[
\tau_{\scr{P}}:=\lim_{q\to 1^-}\limsup_{n\to\infty} \frac{\tau_{\scr{P}}(q,n) }{n},
\]
is guaranteed to exist. The goal is typically to compute upper and lower bounds on $\tau_{\scr{P}}$
for various properties $\scr{P}$.

\subsection{Main Results} 

In this paper, we concentrate on the property of having a Hamiltonian cycle, which we denote by ${\tt HAM}$. As observed in~\cite{beneliezer2019semirandom}, if $G_t$ has a Hamiltonian cycle, then $G_t$ has minimum degree at least 2.
Thus, $\tau_{\tt HAM} \ge  \tau_{\scr{P}}  = \ln 2+ \ln(1+\ln2) \ge 1.21973$, where $\scr{P}$ corresponds to having minimum degree $2$. On the other hand, it is known that the famous $3$-out process is Hamiltonian with probability tending to 1 as $n \to \infty$ (\emph{a.a.s.})~\cite{bohman2009hamilton}. As the semi-random process can be coupled with the $3$-out process, we get that $\tau_{\tt HAM} \le 3$. A new upper bound was obtained in~\cite{gao2020hamilton} in terms of an optimal solution to an optimization problem whose value is believed to be $2.61135$ by numerical support. In the same paper, the lower bound mentioned above was shown to not be tight. The lower bound was increased by $\eps = 10^{-8}$ and so numerically negligible.

The upper bound on $\tau_{\tt HAM}$ of $3$ obtained by simulating the $3$-out process is \textbf{non-adaptive}. That is,
the strategy does \textit{not} depend on the history of the semi-random process. The above mentioned improvement proposed in~\cite{gao2020hamilton} uses an adaptive strategy but in a weak sense. The strategy consists of 4 phases, each lasting a linear number of rounds, and the strategy is adjusted \emph{only} at the end of each phase (for example, the player might identify vertices of low degree, and then focus on connecting
circles to them during the next phase). 

In this paper, we propose a fully adaptive strategy that pays attention to the graph $G_t$ and the position of $u_t$ for every single step $t$. As expected, such a strategy creates a Hamiltonian cycle substantially faster than our weakly adaptive strategy,
and it allows us to improve the upper bound from $2.61135$ to $2.01678$.

\begin{theorem}\label{thm:main_upper_bound}
$\tau_{\texttt{HAM}} \le \alpha \le 2.01678$, where $\alpha$ is derived
from a system of differential equations. 
\end{theorem}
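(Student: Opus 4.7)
The plan is to design a fully adaptive strategy $\scr{S}$ that proceeds in a small number of phases and to analyze the evolution of the underlying (multi)graph $G_t$ via the differential equations method. The goal is to show that after $\alpha n + o(n)$ rounds, with $\alpha \le 2.01678$, the process has whp produced a graph containing a Hamiltonian cycle, which by the definition of $\tau_{\texttt{HAM}}$ yields the desired upper bound.

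I would introduce state variables summarizing $G_t$, typically the counts $Y_i(t)$ of vertices of degree $i$ for small $i$ (with higher-degree vertices lumped together), together with finer data about a path/cycle decomposition of the current structure, such as the number of path components classified by their endpoint degrees. The strategy $\scr{S}$ then prescribes, for each possible value of the square $u_t$ and each possible state, an explicit local rule for the circle $v_t$: for instance, if $u_t$ has degree less than $2$, pair it to another low-degree vertex so as to grow a spanning near-$2$-regular skeleton; if $u_t$ already has degree $2$, pair $v_t$ to a vertex chosen to productively extend or merge path components. Because the rule is local in the state, the one-step drift $\E[Y_i(t+1)-Y_i(t)\mid G_t]$ can be written as a smooth function of the scaled quantities $y_i = Y_i/n$, and Wormald's differential equation method then shows that the scaled state follows the solution of the resulting ODE system up to error $o(1)$ whp throughout the phase. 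The phase's length is determined by the first time the trajectory reaches a prescribed target region.

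After running this main phase for $\alpha' n$ rounds, $G_{\alpha' n}$ has whp essentially the structure needed to complete a Hamiltonian cycle: for instance, a union of linearly many vertex-disjoint paths together with a sublinear ``bad'' remainder. A short finishing phase of at most $(\alpha - \alpha')n$ rounds then closes these up into a single Hamilton cycle. The natural tool is a coupling-and-rotation argument in the spirit of P\'osa: adaptively-added edges perform rotation-extension operations on the path system, and a booster / random-bite argument in the spirit of~\cite{gao2020hamilton,gao2021perfect} shows that a linear number of such operations suffice, provided the residual structure is sufficiently expanding. The final value $\alpha$ is the sum of the phase lengths, numerically optimized over the switch times and the initial parameters of the ODE.

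The main obstacle I foresee is twofold. On the analytic side, the ODE system is not solvable in closed form, so $\alpha$ has to be produced numerically; one has to carefully verify the boundedness, Lipschitz, and trend hypotheses of Wormald's theorem for the chosen state variables, in particular when a vertex's role in the decomposition changes upon a degree transition. On the combinatorial side, the finishing phase must be shown to succeed whp despite potentially adversarial features of the leftover graph: this calls for a robust small-set expansion / P\'osa-rotation argument coupled with the random arrivals $u_t$. It is the quantitative balance between the ODE trajectory and the rotation-based completion that ultimately pins down the bound $\alpha \le 2.01678$.
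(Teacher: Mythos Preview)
Your outline describes a plausible-sounding programme but misses the paper's actual mechanism, and I do not see how your sketch would reach the specific constant $2.01678$.

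The paper does \emph{not} track degree counts $Y_i(t)$ or a path decomposition into many components, and it does not use P\'osa rotation--extension anywhere. Instead, the strategy maintains a \emph{single} path $P_t$ throughout. The key idea, absent from your proposal, is the \textbf{path augmentation} move: when $u_{t+1}$ lands on a vertex $x$ already on $P_t$, the player colours $x$ and plants an edge $xr$ to an unsaturated vertex $r$; if later some $u_i$ lands on a path-neighbour of $x$, the player can splice $r$ into the path using the two edges $xr$ and $u_ir$. The state variables in the differential equations are the path length $X(t)$ together with counts of these coloured ``reservoir'' vertices (one-red, two-red in \ref{alg:fully_randomized}; blue, red, magenta, and type-$(k_1,k_2)$ refinements in \ref{alg:degree_greedy}). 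The constant $2.01678$ is the value of $s$ at which the ODE trajectory for $X(t)/n$ hits $1$, after running \ref{alg:degree_greedy} for $N=100$ phases and then \ref{alg:fully_randomized}; neither a degree-based pairing rule nor a many-path merge is involved.

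Your finishing phase is also off in two respects. First, in the paper the main phase already brings the path to $(1-o(1))n$ vertices at time $\alpha n$, and the clean-up takes only $o(n)$ further rounds; it contributes nothing to the constant $\alpha$. Second, the clean-up again uses the same path-augmentation primitive (build a reservoir of coloured vertices, then wait for a square to land adjacent to one), not P\'osa rotations. A rotation--extension argument would require expansion of the current graph, which is not guaranteed by an arbitrary adaptive strategy in the semi-random model: the player controls only one endpoint of each edge, so the relevant booster edges need not be available. Without the path-augmentation idea your sketch has no concrete mechanism for absorbing a vertex when $u_t$ lands on the path, which is exactly the bottleneck that drives the analysis.
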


The numerical results presented in this paper were obtained using the Julia language~\cite{Julia}. We would like to thank Bogumi\l{} Kami\'nski from SGH Warsaw School of Economics for helping us to implement it. The program is available on-line.\footnote{\texttt{https://math.ryerson.ca/\~{}pralat/research.html\#publications}} 

Moreover, by investigating some specific structures that are generated by the semi-random process, which guarantee the existence of a large set of families of edges that cannot simultaneously contribute to the construction of a Hamiltonian cycle, we improve the lower bound of $\ln 2+ \ln(1+\ln2) \ge 1.21973$ to $1.26575$. The structures we investigate in this work are different from the ones in~\cite{gao2020hamilton}. We attain a simpler proof than in~\cite{gao2020hamilton}, and a much stronger bound.

\begin{theorem}\label{thm:main_lower_bound}
Let $f(s)=2+e^{-3s}(s+1)\left(1-\frac{s^2}{2}-\frac{s^3}{3}-\frac{s^4}{8}\right)+e^{-2s}\left(2s+\frac{5s^2}{2}+\frac{s^3}{2}\right)-e^{-s}\left(3+2s\right)$, and let $\beta\approx 1.26575$ be the positive root of $f(s)-1=0$. Then,
$\tau_{\texttt{HAM}} \ge \beta$.
\end{theorem}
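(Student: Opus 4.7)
The plan is to identify, for any strategy $\scr{S}$ and any $s < \beta$, a collection of local substructures in $G_{sn}$ that together force enough Hamilton-cycle-incompatible edges to prevent $G_{sn}$ from being Hamiltonian. Concretely, I would aim at an upper bound of the form $f(s) \cdot n + o(n)$ on the number of edges that can simultaneously belong to a Hamiltonian cycle; since any Hamiltonian cycle uses exactly $n$ edges, $f(s) < 1$ rules out Hamiltonicity, and $\beta$ is the smallest $s$ with $f(s) \geq 1$.

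The first ingredient is the distribution of square-arrivals. After $t = sn$ rounds the out-degree of a fixed vertex $v$ (the number of steps at which $v$ appears as the square $u_t$) is $\Bin(t, 1/n)$ and converges to a Poisson random variable with mean $s$ as $n\to\infty$; out-degrees of any bounded number of distinct vertices are asymptotically independent. Consequently, any event defined by the square-arrival pattern at $k$ specified vertices has asymptotic probability of the form $e^{-ks}$ times a polynomial in $s$, producing the three main blocks of $f(s)$ corresponding to $k = 1, 2, 3$.

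The combinatorial core of the argument is to enumerate local configurations -- involving at most three vertices with prescribed out-degrees and adjacency patterns -- each of which carries a certificate that a specific edge of $G_{sn}$ cannot lie in any Hamiltonian cycle. The key observation is that a vertex of low out-degree has most of its Hamiltonian incidences realized by the player choosing it as a circle, which places severe constraints on the player's remaining degrees of freedom and often creates forced subgraphs (short cycles, parallel edges, or vertices of forced degree at least three) that are inconsistent with completion to a Hamiltonian cycle. Summing these configurations, with suitable accounting to avoid double-counting across overlapping certificates, should yield the polynomial coefficients $s+1$, $1 - s^2/2 - s^3/3 - s^4/8$, $2s + 5s^2/2 + s^3/2$, and $3 + 2s$ that appear in the respective Poisson terms of $f(s)$. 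Linearity of expectation then gives the mean, and a second-moment or Azuma--Hoeffding martingale argument (each configuration is local, so changing a single round affects $O(1)$ of them) gives concentration, uniformly over all adaptive strategies.

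The main obstacle will be choosing the collection of local configurations so that the bookkeeping matches $f(s)$ exactly rather than a weaker upper bound. Since the strategy is adaptive, one must argue strategy-independently: every ``incompatible'' edge must be genuinely excluded from any Hamiltonian cycle, not merely from one specific play. The simplification over \cite{gao2020hamilton} presumably comes from finding a cleaner family of certificates for which the tightening is transparent; verifying this tightness -- in particular that the polynomial prefactors above arise precisely from the allowable in-degree and adjacency patterns, with the $+,+,-$ sign pattern reflecting how configurations at one, two, or three vertices must be combined -- is where the bulk of the delicate combinatorial work lies.
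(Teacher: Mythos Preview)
Your outline is in the right spirit, but it is missing the two concrete ingredients that make the paper's argument work. First, the paper does not count ``edges that cannot lie in a Hamiltonian cycle'' directly; it counts \emph{squares} that can contribute. The baseline is $Z=\sum_x(\bm{1}_{Z_x=1}+2\cdot\bm{1}_{Z_x\ge 2})$, since any Hamiltonian cycle uses exactly $n$ squares and at most two per vertex. The subtractive structures are not generic ``local configurations with prescribed out-degrees'' but two very specific families of ordered pairs $(x,y)$: in $\W_1$, $x$ receives exactly one square at some step $i$, $y$ is the circle paired to it, and $y$ receives at least two squares \emph{after} step $i$; $\W_2$ is the analogous family where $x$ receives exactly two squares. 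The key observation driving these is that if $(u_i,v_i)$ is an edge and $v_i$ later receives two squares, then either $H$ uses at most one square on $v_i$ or $H$ avoids the edge $(u_i,v_i)$; in either case one unit can be shaved from $Z$. The sets $\T_1,\T_2$ correct for double-counting when two such pairs share a vertex, and the integrals for $|\W_1|,|\W_2|,|\T_1|,|\T_2|$ are what assemble into $f(s)$.

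Second, and more seriously, your concentration argument has a genuine gap. The claim that ``changing a single round affects $O(1)$ configurations'' is false for an arbitrary adaptive strategy: if the player piles $\Theta(n)$ circles onto a single vertex $y$, then the single round in which $y$ receives a square can flip membership in $\W_1$ or $\W_2$ for $\Theta(n)$ pairs $(x,y)$, so neither Azuma nor a naive second moment applies. The paper handles this by first reducing to $\mu$-\emph{well-behaved} strategies (no vertex receives more than $\mu=\sqrt{n}$ circles), arguing via a coupling and an ``approximate Hamiltonian cycle'' relaxation that this restriction is without loss for the lower bound. Only after this reduction does the second-moment computation go through (with the overlap contribution bounded by $\mu n = o(n^2)$). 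Without this step your concentration claim, and hence the entire lower bound, does not follow.
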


\subsection{Previous Results} 

Let us briefly describe a few known results on the semi-random process. In the very first paper~\cite{beneliezer2019semirandom}, it was shown that the process is general enough to approximate (using suitable strategies) several well-studied random graph models. In the same paper, the process was studied for various natural properties such as having minimum degree $k \in \Nn$ or having a fixed graph $H$ as a subgraph. In particular, it was shown that \emph{a.a.s.}\ one can construct $H$ in less than $n^{(d-1)/d} \omega$ rounds where $d \ge 2$ is the degeneracy of $G$ and $\omega = \omega(n)$ is any function that tends to infinity as $n \to \infty$. 
This property was recently revisited in~\cite{behague2022} where the conjecture from~\cite{beneliezer2019semirandom} was proven for any graph $H$: \emph{a.a.s.}\ it takes at least $n^{(d-1)/d} / \omega$ rounds to create $H$.

Another property that was studied in the context of semi-random processes is a property of having a perfect matching, which we denote by {\tt PM}. Since the $2$-out process has a perfect matching \emph{a.a.s.}~\cite{frieze1986maximum}, we immediately get that $\tau_{\texttt{PM}} \le 2$. By coupling the semi-random process with another random graph that is known to have a perfect matching \emph{a.a.s.}~\cite{pittel}, the bound can be improved to $1+2/e < 1.73576$. This bound was recently improved by the authors of this paper by investigating another fully adaptive algorithm~\cite{gao2021perfect}. The currently best upper bound is $\tau_{\texttt{PM}} < 1.20524$. In the same paper, the lower bound observed in~\cite{beneliezer2019semirandom} ($\tau_{\texttt{PM}} \ge \ln(2) > 0.69314$) was improved as well, and now we know that  $\tau_{\texttt{PM}} > 0.93261$~\cite{gao2021perfect}.

Finally, let us discuss what is known about the property of containing a given spanning graph $H$ as a subgraph. It was asked by Noga Alon whether for any bounded-degree $H$, one can construct a copy of $H$ \emph{a.a.s.}\ in $O(n)$ rounds.  This question was answered positively in a strong sense in~\cite{beneliezer2020fast}, in which it was shown that any graph with maximum degree $\Delta$ can be constructed \emph{a.a.s.}\ in $(3\Delta/2+o(\Delta))n$ rounds. They also proved that if $\Delta = \omega (\log(n))$, then this upper bound improves to $(\Delta/2 + o(\Delta))n$ rounds. Note that both of these upper bounds are asymptotic in $\Delta$. When $\Delta$ is constant in $n$, such as in both the perfect matching and Hamiltonian cycle setting, determining the optimal dependence on $\Delta$ for the number of rounds needed to construct $H$ remains open.

\section{Proof of Theorem \ref{thm:main_upper_bound}}

\subsection{Algorithmic Preliminaries} \label{sec:preliminaries}

In this section, we introduce some notation as well as the basic ideas used in the design of all of our strategies.

The main ingredient for proving Theorem~\ref{thm:main_upper_bound} is to specify a strategy which keeps augmenting or extending a path, until the path becomes Hamiltonian. Then, with a few more steps, the Hamiltonian path can be completed into a Hamiltonian cycle. 
Let us suppose that after $t \ge 0$ steps, we have constructed the graph $G_t$  which contains the path $P_t$. Define $U_t$ to be the set of vertices \textit{not} in $P_t$, 
which we refer to as the \textbf{unsaturated} vertices of $[n]$. It will be convenient to denote the (induced) distance between vertices $x, y \in V(P_t)$ on the path $P_t$ by $d_{P_t}(x,y)$.
We also define $d_{P_t}(x,Q) := \min_{q \in Q} d_{P_t}(x,q)$ for $x\in V(P_t)$ and $Q \subseteq V(P_t)$. 

Let us first assume that $u_{t+1}$ lands in $U_{t}$. In this case, we can clearly extend the path $P_{t}$ by an edge by choosing $v_{t+1}$ to be an endpoint of $P_{t}$. We call such a move a \textbf{(greedy) path extension}. Now, suppose that
$u_{t+1}$ lands on a vertex $x \in P_{t}$. In this case, we cannot perform a greedy path extension, however
we can still choose $v_{t+1}$ in a way that will help us extend the path in the future rounds. Specifically, 
set $v_{t+1}:=r$ for some $r \in U_{t}$, and \textbf{colour} the vertex $x$ as well as the edge $xr$. Suppose that in some round $i > t+1$,  $u_i$ lands on $y$ next
to the coloured vertex $x$ on $P_i$ (i.e., $d_{P_i}(x,y) =1$). In this case, set $v_{i} = r$. Observe
now that we can add $r$ to the current path by adding the edges $y r$ and $x r$ to it, and by removing
the edge $y x$. Thus, despite $u_s$ not landing on an unsaturated vertex, we are still able to 
perform a move which extends its length by one. We call such an operation a \textbf{path augmentation}.

\subsection{Proof Overview} \label{sec:upper_bound_overview}

In order to prove Theorem~\ref{thm:main_upper_bound}, we analyze a strategy which proceeds in
three distinct \textbf{stages}. In the first stage, we execute \ref{alg:degree_greedy}, an algorithm which makes greedy path extensions whenever possible, and otherwise sets up path augmentation operations for future rounds in a degree greedy manner. Specifically, $v_{t+1}$ is chosen amongst the unsaturated vertices of minimum coloured in-degree. This degree greedy decision is done to minimize the number of coloured vertices which are destroyed when path augmentations and extensions are made in later rounds. This stage lasts for $N$ \textbf{phases}, where $N$ is any non-negative integer that may be viewed as the parameter of the algorithm (here a phase is a contiguous set of steps shorter than the full stage). For the claimed (numerical) upper bound of Theorem~\ref{thm:main_upper_bound}, $N$ is set to $100$. Setting smaller values of the parameter $N$---in particular, setting $N=0$---yields an algorithm that is easier to analyse. Setting $N > 100$ can slightly improve the bound in Theorem~\ref{thm:main_upper_bound}, but the gain is rather insignificant. The second stage starts at some random step $t_0$ (i.e.\ $t_0-1$ is the total number of steps in stage one), and we execute \ref{alg:fully_randomized}, an algorithm which makes greedy path extensions whenever possible, and otherwise chooses $v_{t+1}$ randomly amongst the unsaturated vertices. We execute \ref{alg:fully_randomized} until we are left with $\eps n$ unsaturated vertices, where $\eps=\eps(n)$ tends to $0$ as $n \rightarrow \infty$ arbitrarily slowly. At this point, we proceed to the final stage where a clean-up algorithm is run, which also uses
path augmentations. Using elementary concentration inequalities we prove that
a Hamiltonian cycle can be  constructed in an additional $O(\sqrt{\eps}n)=o(n)$ steps.

In Section~\ref{warm-up-upper-bound}, we first describe \ref{alg:fully_randomized},
as it is easier to state and analyze than \ref{alg:degree_greedy}. Moreover, if we take $N=0$,
which corresponds to executing \ref{alg:fully_randomized} from the beginning, then we will be left with a path on all but $\eps n$ vertices after $\alpha^{*}n$ steps where $\alpha^{*} \le 2.07721$. Our third stage clean-up algorithm from Section \ref{sec:clean_up} allows us to complete the Hamiltonian cycle in another $o(n)$ steps. Thus, Sections \ref{warm-up-upper-bound} and \ref{sec:clean_up} provide a self-contained proof of an upper bound on $\tau_{\mathtt{HAM}}$ of $\alpha^{*} \le 2.07721$ (see Theorem~\ref{thm:warm_up}). Afterwards, in Section \ref{sec:degree_greedy} we formally state and analyze our first stage algorithm. This is the most technical section of the paper, as \ref{alg:degree_greedy} makes decisions in a more intelligent manner than \ref{alg:fully_randomized} which necessitates more random variables in its analysis. By executing these three stages in the aforementioned order, we attain the claimed upper bound of Theorem~\ref{thm:main_upper_bound}.

\subsection{A Fully Randomized Algorithm}\label{warm-up-upper-bound} 

In order to describe our algorithm,
it will be convenient to colour certain edges of $G_t$ red. This helps us define certain vertices used by our
algorithm for path augmentations. Specifically, $x \in V(P_t)$ is \textbf{one-red} provided
it is adjacent to precisely one red edge of $G_t$. Similarly, $x \in V(P_t)$ is \textbf{two-red}, provided it is adjacent to precisely two red edges of $G_t$.
We denote the one-red vertices and two-red vertices by $\scr{L}^{1}_t$
and $\scr{L}^{2}_t$, respectively, and refer to $\scr{L}_t := \scr{L}^{1}_t \cup \scr{L}^{2}_t$ as the \textbf{red} 
vertices of $G_t$.
By definition, $\scr{L}^{1}_t$ and $\scr{L}^{2}_t$ are disjoint.
It will also be convenient to maintain a set of \textbf{permissible vertices} $\scr{Q}_t \subseteq V(P_t)$
which specifies which uncoloured vertices on the path can be turned red. In order to simplify our analysis,
we specify the size of $\scr{Q}_t$ and ensure that it only contains vertices of path distance at least $3$ from the red vertices on $P_t$. 
Formally:
\begin{enumerate}
    \item[(i)] $|\scr{Q}_t| = |V(P_t)| - 5|\scr{L}_t|$. \label{eqn:permissible_size}
    \item[(ii)] If $\scr{L}_t \neq \emptyset$, then each $x \in \scr{Q}_t$ satisfies $d_{P_t}(x,\scr{L}_t) \ge 3$. \label{eqn:permissible_distance}
\end{enumerate}
When $\scr{L}_t = \emptyset$, we simply take $\scr{Q}_t = V(P_t)$.
Otherwise, since $|\{ x \in V(P_t) : d_{P_t}(x,\scr{L}_t) \le 2 \}| \le 5 |\scr{L}_t|$, we can maintain
these properties by initially taking $\{ x \in V(P_t) : d_{P_t}(x,\scr{L}_t) \ge 3\}$, and then (if needed) arbitrarily removing
$|\{ x \in V(P_t) : d_{P_t}(x,\scr{L}_t) \ge 3\}| - (|V(P_t)| - 5 |\scr{L}_t|)$ vertices from it.

Upon the arrival of $u_{t+1}$, there are four main cases our algorithm must handle. The first
two cases involve extending the length of the path, whereas the latter two describe what to do
when it is not possible to extend the path in the current round. 

\begin{enumerate}
    \item If $u_{t+1}$ lands within $U_t$, then greedily extend $P_{t}$. \label{eqn:greedy_extend}
    \item If $u_{t+1}$ lands at path distance one from some $x \in \scr{L}_t$, then augment $P_t$ via an arbitrary red edge of $x$. \label{eqn:path_augment}
    \item If $u_{t+1}$ lands in $\scr{Q}_t$, then choose $v_{t+1}$ u.a.r.
    amongst $U_t$, and colour $u_{t+1} v_{t+1}$ red. This case creates a one-red vertex. \label{eqn:one_cheery} 
    \item If $u_{t+1}$ lands in $\scr{L}^{1}_t$, then choose $v_{t+1}$ u.a.r. amongst $U_t$ and colour $u_{t+1} v_{t+1}$
     red. This case converts a one-red vertex to a two-red vertex. 
\end{enumerate}
In all the remaining cases, we choose $v_{t+1}$ arbitrarily, and interpret
the algorithm as \textit{passing} on the round, meaning the edge $u_tv_t$ will not be used to construct a Hamiltonian cycle. In particular, the algorithm passes rounds in which $u_{t+1}$ lands at path distance two from some $x \in \scr{L}_t$. This guarantees that no two red vertices are at distance two from each other and so when $u_{t+1}$ lands next to a red vertex, this neighbouring red vertex is uniquely identified.
Let us say that a red vertex is \textbf{well-spaced}, provided it is at distance
at least $3$ on the path from all other red vertices, and it is \textit{not} an endpoint of $P_t$.
Observe that each well-spaced red vertex yields precisely two vertices on $P_t$ where a path augmentation involving $u_{t+1}$ can occur. By construction, all but at most $2$ of the algorithm's red vertices are well-spaced.

We now formally describe step $t+1$  of the algorithm when $u_{t+1}$ is drawn u.a.r.\ from $[n]$.
Specifically, we describe how the algorithm chooses $v_{t+1}$, how it constructs $P_{t+1}$, and how it adjusts the colours of $G_{t+1}$, thus updating $\scr{L}^{1}_t$ and $\scr{L}^{2}_t$. 
\begin{varalgorithm}{$\mathtt{FullyRandomized}$}
\caption{Step $t+1$} 
\label{alg:fully_randomized}
\begin{algorithmic}[1]
\If{$u_{t+1} \in U_t$} \Comment{greedily extend the path.}
\State Let $v_{t+1}$ be an arbitrarily chosen endpoint of $P_t$. 
\State Set $V(P_{t+1}) = V(P_t) \cup \{u_{t+1}\}$, $E(P_{t+1}) = E(P_t) \cup \{u_{t+1} v_{t+1} \}$. 
\State Uncolour all of the edges adjacent to $u_{t+1}$.

\ElsIf{$d_{P_t}(u_{t+1}, \scr{L}_t) =1$}     \Comment{path augment via red vertices}
\State Let $x \in \scr{L}_t$ be the (unique) red vertex adjacent to $u_{t+1}$ 

\State Denote $x r \in E(G_t)$ an arbitrary red edge of $x$, and set $v_{t+1} = r$, where $r \in U_t$.
\State Set $V(P_{t+1}) = V(P_t) \cup \{ r \}$ and $E(P_{t+1}) = E(P_t) \cup \{u_{t+1} r ,x r\} \setminus \{u_{t+1}x \}$.
\State Uncolour all of the edges adjacent to $r$.

\ElsIf{$u_{t+1} \in \scr{Q}_t \cup \scr{L}_t$}  \Comment{construct red vertices}
\State Choose $v_{t+1}$ u.a.r.\ from $U_t$.
\State Colour $u_{t+1} v_{t+1}$ red.    \Comment{construct a one-red or two-red vertex}  
\State Set $P_{t+1} = P_t$.

\Else{} \Comment{pass on using edge $u_{t+1} v_{t+1}$.}
\State Choose $v_{t+1}$ arbitrarily from $[n]$.

\State Set $P_{t+1} = P_t$.

\EndIf
\State Update $\scr{Q}_{t+1}$ such that $|\scr{Q}_{t+1}| = |V(P_{t+1})| - 5|\scr{L}_{t+1}|$.

\end{algorithmic}
\end{varalgorithm}

We define the random variables $X(t)= |V(P_t)|$, $L_{1}(t)= |\scr{L}^{1}_t|$, 
$L_{2}(t) = |\scr{L}^{2}_t|$, and $L(t)=|\scr{L}_t|=L_{1}(t)+L_{2}(t)$. Note that $L(t)$
is an auxiliary random variable which we define only for convenience. 
We use $\Delta$ to denote the one step changes in our random variables (i.e., $\Delta X(t): = X(t+1) - X(t)$). Recall that $t_0$ is the step when \ref{alg:fully_randomized} is called.
Let us first show that our random variables cannot change drastically in one round. 

\begin{lemma}[Boundedness Hypothesis -- \ref{alg:fully_randomized}] \label{lem:lipschitz_randomized}
With probability $1 - O(n^{-1})$, 
$$\max\{ |\Delta X(t)|, |\Delta L_{1}(t)|, |\Delta L_{2}(t)|\} = O(\log n) $$
for all $t_0 \le t \le 3n$ with $n -X(t) \ge n/ \log n$. 
\end{lemma}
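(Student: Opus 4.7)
The plan is to handle $\Delta X$ deterministically and $\Delta L_1, \Delta L_2$ via a uniform bound on the red-degree of every vertex. First, $|\Delta X(t)|\le 1$ is immediate by inspection of \ref{alg:fully_randomized}: in any single round, at most one vertex is added to $V(P_{t+1})$ (either $u_{t+1}$ in case 1 or the auxiliary $r$ in case 2), so there is no stochastic component to this bound.

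For $\Delta L_1(t)$ and $\Delta L_2(t)$, the key observation is that $\scr{L}^{1}_t$ and $\scr{L}^{2}_t$ can change only through two mechanisms: (i) constant-size direct updates in cases 2 and 3 (the single red vertex $x$ loses one red edge in case 2, and the square $u_{t+1}$ itself shifts status in case 3); and (ii) the uncolouring of all red edges at a newly-saturated vertex $w$ (namely $w=u_{t+1}$ in case 1 or $w=r$ in case 2), which drops each path-side endpoint of such a red edge down one status level. Writing $Z_w(t)$ for the number of red edges incident to $w$ at time $t$, mechanism (ii) contributes $O(Z_w(t))$ to $|\Delta L_1(t)|+|\Delta L_2(t)|$ while (i) contributes $O(1)$. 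Thus it suffices to show that $Z_v(t)=O(\log n)$ uniformly in $v\in[n]$ and $t_0\le t\le 3n$ with $n-X(t)\ge n/\log n$, with probability $1-O(n^{-1})$.

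For this bound, I exploit that red edges are created only in case 3, in which the path-side endpoint is the square $u_s$ (uniform on $[n]$) and the unsaturated endpoint $v_s$ is drawn \uar{} from $U_{s-1}$; an uncoloured edge is never recoloured. Let $R_v^{\square}$ count the rounds $s\le 3n$ in which a red edge is created with $v=u_s$, and let $R_v^{\circ}$ count the rounds with $v=v_s$. Then $Z_v(t)\le R_v^{\square}+R_v^{\circ}$ for every $t\le 3n$. The variable $R_v^{\square}$ is stochastically dominated by $\Bin(3n,1/n)$. For $R_v^{\circ}$, monotonicity of $U_s$ is the crucial tool: since $V(P_\cdot)$ is monotone non-decreasing, the hypothesis $n-X(t)\ge n/\log n$ at some $t\le 3n$ forces $|U_{s-1}|\ge n/\log n$ for all $s\le t$, so the conditional probability that $v_s=v$ in any relevant round is at most $(\log n)/n$, and $R_v^{\circ}$ is stochastically dominated by $\Bin(3n,(\log n)/n)$ with mean $O(\log n)$. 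A Chernoff bound gives that each count is $O(\log n)$ except with probability $O(n^{-3})$, and a union bound over the $n$ choices of $v$ yields the required failure probability.

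The main obstacle is the careful bookkeeping of mechanisms (i)--(ii): one must verify that no one-step changes in $L_1$ or $L_2$ have been overlooked, and in particular that the end-of-round update of $\scr{Q}_{t+1}$ cannot alter $L_1$ or $L_2$ (since permissibility is tracked separately from red status, and the set $\scr{L}_t$ drives whether a given permissible vertex is usable, not vice versa). The probabilistic step is routine once the monotonicity $|U_{s-1}|\ge |U_t|$ is used to discharge the conditioning.
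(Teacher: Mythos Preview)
Your proof is correct and follows essentially the same approach as the paper: deterministic $|\Delta X(t)|\le 1$, then bound the red-degree of the newly-saturated vertex by stochastic domination with a binomial of mean $O(\log n)$, Chernoff, and a union bound. Your version is a bit more explicit in separating the $O(1)$ direct updates from the uncolouring contribution, and you union-bound only over the $n$ vertices (using a single time-uniform count $R_v^\square+R_v^\circ$) rather than over all $O(n^2)$ vertex--step pairs as the paper does; both are fine and yield the required $1-O(n^{-1})$ probability.
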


\begin{proof} 
Note that, by design, the path can only increase its length but it cannot absorb more than one vertex in each round. Hence, the desired property clearly holds for the random variable $X(t)$. To estimate the maximum change for the random variables $L_1(t)$ and $L_2(t)$, we need to upper bound the number of red edges adjacent to any particular unsaturated vertex $v$. Observe that at any step $t \le 3n$, since we have assumed there are at least $n/ \log n$ unsaturated vertices, the number of red edges adjacent to $v$ is stochastically upper bounded by the binomial random variable $\Bin(3n, \log n/n $) with expectation $3 \log n $. It follows immediately from Chernoff's bound that with probability $1-O(n^{-3})$, the number of red edges adjacent to $v$ is $O(\log n)$, and so the desired bound holds by union bounding over all $3n^2$ vertices and steps.
\end{proof}

Let us denote $H_t = ( X(i), L_{1}(i), L_{2}(i))_{0 \le i \le t}$. Note
that $H_t$ does \textit{not} encompass the entire history of the random process
after $t$ rounds (i.e., $G_0, \ldots ,G_t$, the first $t+1$
graphs constructed by the algorithm). This deferred information exposure permits a tractable analysis of the random positioning of $v_t$ when $u_t$ is red.
We observe the following expected difference equations.
\begin{lemma}[Trend Hypothesis -- \ref{alg:fully_randomized}] \label{lem:randomized_expected_differences}
For each $t \ge t_0$, if $n -X(t) \ge n/ \log n$, then
\begin{eqnarray}
\E[\Delta X(t) \mid H_t]&=&  1 - \frac{X(t)}{n} + \frac{2 L(t)}{n} +O(\log n/n) \label{diff-X}\\
\E[ \Delta L_{1}(t)\mid H_t]&=&  \frac{X(t) - 5L(t)}{n} + \frac{2 L_{1}(t)}{n}\left( \frac{2 L_{2}(t)}{n - X(t)} - \frac{L_{1}(t)}{n-X(t)} -1 \right) \nonumber\\
&& + \frac{2 L_{2}(t)}{n}\left( 1 + \frac{2 L_{2}(t)}{n - X(t)} - \frac{L_{1}(t)}{n-X(t)} \right) - \frac{L_{1}(t)}{n} \nonumber\\
&& + \left(1 - \frac{X(t)}{n} \right)\left( \frac{2 L_{2}(t)}{n - X(t)} - \frac{L_{1}(t)}{n - X(t)} \right) + O(\log n/n)\label{diff-L1}\\
\E[ \Delta L_{2}(t) \mid H_t] &=& \frac{L_{1}(t)}{n} - \left(1 - \frac{X(t)}{n}\right) \frac{2 L_{2}(t)}{n-X(t)}  - \frac{2 L_{1}(t)}{n} \frac{2 L_{2}(t)}{n-X(t)}\nonumber\\
&& - \frac{2 L_{2}(t)}{n} \left( 1 + \frac{2 L_{2}(t)}{n - X(t)} \right) + O(\log n/n).\label{diff-L2}
\end{eqnarray}
\end{lemma}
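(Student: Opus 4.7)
The plan is to decompose the expectation by conditioning on where the uniformly random square $u_{t+1}$ lands, which determines the branch of $\mathtt{FullyRandomized}$ that is executed. The relevant events, with their exact probabilities, are: $u_{t+1}\in U_t$ (greedy extension, probability $(n-X(t))/n$); $d_{P_t}(u_{t+1},\mathcal{L}_t)=1$ (path augmentation, probability $2L(t)/n+O(1/n)$, using that all but at most two red vertices are well-spaced and each such vertex contributes exactly two distance-one neighbours on $P_t$); $u_{t+1}\in\mathcal{Q}_t$ (creates a fresh one-red vertex, probability $(X(t)-5L(t))/n$); $u_{t+1}\in\mathcal{L}^1_t$ (upgrades a one-red to a two-red vertex, probability $L_1(t)/n$); and the complement, the pass case, in which no state variable changes. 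To compute expectations within each branch I will invoke the deferred-information principle: conditional on $H_t$, the unsaturated endpoint $r$ of any given red edge $xr$ is uniform over $U_t$, since $r$ was chosen u.a.r.\ in $U_i\supseteq U_t$ at its birth round and the conditioning on survival preserves uniformity by symmetry. In particular, for any fixed $w\in U_t$, the expected number of red edges at $w$ hitting $\mathcal{L}^1_t$ (resp.\ $\mathcal{L}^2_t$) equals $L_1(t)/(n-X(t))$ (resp.\ $2L_2(t)/(n-X(t))$).

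Equation \eqref{diff-X} is immediate: only the greedy-extension and path-augmentation events grow $|V(P_t)|$, each by exactly one vertex. For \eqref{diff-L1} and \eqref{diff-L2}, I will sum contributions from the four non-pass events. In the greedy-extension event, $u_{t+1}$ is uniform on $U_t$ and all its red edges are uncoloured; each such edge to a one-red vertex removes it from $\mathcal{L}^1_t$, while each to a two-red vertex moves it into $\mathcal{L}^1_t$ and out of $\mathcal{L}^2_t$. Weighting by $(n-X)/n$ produces the fifth summand of \eqref{diff-L1} and the second of \eqref{diff-L2}. In the path-augmentation event, splitting on whether the red neighbour $x$ lies in $\mathcal{L}^1_t$ (probability $2L_1/n+O(1/n)$) or $\mathcal{L}^2_t$ (probability $2L_2/n+O(1/n)$) accounts for the deterministic change at $x$ together with the same uncolouring effect at the newly saturated circle $r\in U_t$; this yields the second and third summands of \eqref{diff-L1} and the third and fourth summands of \eqref{diff-L2}. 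Finally, the $\mathcal{Q}_t$-event contributes $+(X-5L)/n$ to \eqref{diff-L1} from the freshly created one-red vertex, and the $\mathcal{L}^1_t$-event contributes $-L_1/n$ to \eqref{diff-L1} and $+L_1/n$ to \eqref{diff-L2} from the upgrade.

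The $O(\log n/n)$ error absorbs three approximation sources: (i) the $O(1/n)$ slack from the at most two red vertices that fail to be well-spaced; (ii) $O(1/(n-X(t)))=O(\log n/n)$ corrections that appear when conditioning on a specific edge being red, such as replacing $L_1/(n-X)$ by $(L_1-1)/(n-X)$ when the red neighbour is itself one-red, or discounting $xr$ from the tally of other red edges at $r$; and (iii) contributions from rare pathologies, for example two red edges sharing both endpoints, which are controlled via Lemma~\ref{lem:lipschitz_randomized}, whose proof already certifies that every vertex carries only $O(\log n)$ red edges with probability $1-O(n^{-1})$. The main obstacle will be the bookkeeping in the path-augmentation branch: $r$ is not marginally uniform over $U_t$ but conditioned to be a red neighbour of the specific vertex $x$, so one must cleanly separate the deterministic effect of uncolouring $xr$ from the effect of uncolouring $r$'s other red edges, and then verify that the residual conditioning corrections remain inside the $O(\log n/n)$ slack above.
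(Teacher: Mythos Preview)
Your proposal is correct and follows essentially the same route as the paper's proof: the same case decomposition according to where $u_{t+1}$ lands, the same key deferred-information observation that conditional on $H_t$ the circles of the surviving red edges are uniform over $U_t$, and the same bookkeeping that separates the deterministic loss of the augmenting edge $xr$ from the expected losses among the other red edges incident to the newly saturated vertex. Your treatment of the error sources (the at most two non-well-spaced red vertices, the $O(1/(n-X(t)))=O(\log n/n)$ corrections, and the collision events) matches the paper's, and you even flag explicitly the subtlety that in the augmentation branch $r$ is not marginally uniform---which the paper handles but does not highlight.
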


The proof is obtained by examining how the landing of $u_t$ affects the random variables under study. For instance, for $X(t)$, observe that $\Delta X(t)$ is $1$ when $u_{t+1}$ lands on an unsaturated
vertex, or adjacent to a red vertex; and is 0 otherwise. Combining with the probabilities of the above two events yields~(\ref{diff-X}). The proofs for~(\ref{diff-L1}) and~(\ref{diff-L2}) are similar and the details can be found in Appendix~\ref{sec:lemma_proofs}.

In order to analyze \ref{alg:fully_randomized}, we shall employ the differential equation method~\cite{de}. This method is commonly used in probabilistic combinatorics to analyze random processes that evolve step by step. The step changes must be small in relation to the entirety of the discrete structure. For instance, in our application, this refers to adding one edge at a time to the graph on $[n]$ vertices. The method allows us to derive tight bounds on the associated random variables which hold a.a.s. at every step of the random process. We refer the reader
to \cite{bennett_2022} for a gentle introduction to the methodology. The execution of \ref{alg:fully_randomized} starts at some random step $t_0$, which we will prove is a.a.s.\ asymptotic to $s_0n$ for some constant $0 \le s_0 < 1$. Let $X(t_0)$ denote the number of vertices on $P_t$ after the execution of \ref{alg:degree_greedy}. We shall prove that there exists some constant $\hat{x}(s_0)$ such that $|X(t_0)/n - \hat{x}(s_0)| \le \lambda$ for some $\lambda = o(1)$. If $N$ is set to 0, then $t_0=s_0= X(0)= \hat{x}(0)=0$.

Let us now fix a sufficiently small constant $\eps>0$, and define the bounded domain 
$$\scr{D}_{\eps}:= \{ (s,x,\ell_1, \ell_2):  -1 < s < 3, -1 < x < 1 - \eps, |\ell_1| < 2, |\ell_2| <2\}.$$
Consider the system of differential equations in variable $s$
with functions $x =x(s),\ell_1 = \ell_{1}(s)$, and $\ell_{2}=\ell_{2}(s)$:
\begin{eqnarray} 
    x' &=& 1 - x + 2 (\ell_{1} + \ell_2) \label{ode1} \\
    \ell_{1}' &=& x - 5 (\ell_1 + \ell_2) + \ell_1 \left( \frac{2 \ell_2 - \ell_1 }{1-x} -1 \right) 
     + 2\ell_2 \left( 1+  \frac{ 2\ell_2 - \ell_1}{1-x} \right) -\ell_1 + 2 \ell_2 -\ell_1  \label{ode2}  \\
    \ell_2' &=& \ell_1 - 2 \ell_2 - 2\ell_1 \left( \frac{2 \ell_2}{1-x} \right) -2 \ell_2 \left( 1 + \frac{2 \ell_2}{1-x} \right). \label{ode3} 
\end{eqnarray}
The right-hand side (r.h.s.) of each of the above equations is Lipchitz on the domain $\scr{D}_{\eps}$.
Define 
\[
T_{\scr{D}_{\eps}}=\min\{t\ge 0: \ (t/n, X(t)/n,L_1(t)/n,L_2(t)/n)\notin \scr{D}_{\eps}\}.
\]
Now, the `Initial Condition' of Theorem \ref{thm:differential_equation_method} is satisfied
with values $(s_0,\hat{x}(s_0), 0,0)$ and some $\lambda = o(1)$. Moreover,
the `Trend Hypothesis' and `Boundedness Hypothesis' are satisfied with some $\delta=O(\log n/n)$, $\beta=O(\log n)$ and $\gamma=o(n^{-1})$, by Lemmas~\ref{lem:lipschitz_randomized} and~\ref{lem:randomized_expected_differences}. Thus, for every $\delta>0$, $X(t)=nx(t/n)+o(n)$, $L_1(t)=n\ell_1(t/n)+o(n)$ and $L_2(t)=n\ell_2(t/n)+o(n)$ uniformly for all $t_0 \le t \le(\sigma(\eps)-\delta) n$, where $x$, $\ell_1$ and $\ell_2$ are the unique solution to~(\ref{ode1})--(\ref{ode3}) with initial conditions $x(s_0)=\hat{x}(s_0)$, $\ell_1(s_0)=\ell_2(s_0)=0$, and $\sigma(\eps)$ is the supremum of $s$ to which the solution can be extended before reaching the boundary of $\scr{D}_{\eps}$. For $N=0$, $s_0=0$ and the initial conditions are simply $x(0)=\ell_1(0)=\ell_2(0)=0$. This immediately yields the following.

\begin{lemma}[Concentration of \ref{alg:fully_randomized}'s Random Variables] \label{lem1:concentration_random}
For every $\delta>0$, a.a.s.\ for all $t_0 \le t\le (\sigma(\eps)-\delta)n$,
\[
    \max\{|X(t) -x(t/n) n|,|L_{1}(t) - \ell_{1}(t/n) n|, |L_{2}(t) - \ell_{2}(t/n) n| \} =  o(n). 
\]
\end{lemma}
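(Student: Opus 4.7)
The plan is to derive this lemma as a direct application of the differential equation method theorem (Theorem \ref{thm:differential_equation_method}) to the three-dimensional process $(X(t), L_1(t), L_2(t))$ starting at the (random) time $t_0$. All the hard analytic work has effectively been front-loaded into Lemmas \ref{lem:lipschitz_randomized} and \ref{lem:randomized_expected_differences}, so the job here is chiefly one of verification and bookkeeping: identify the correct domain, the correct initial data, and check that the quantitative hypotheses of the method are met with error parameters small enough to yield $o(n)$ concentration.

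First, I would normalize and rescale: set $\hat X(t) = X(t)/n$, $\hat L_i(t) = L_i(t)/n$, and use $s = t/n$ as the time variable, so that the system \eqref{ode1}--\eqref{ode3} governs the limit. I would then check, on the bounded domain $\scr{D}_\eps$, that the right-hand sides of \eqref{ode1}--\eqref{ode3} are Lipschitz functions of $(x, \ell_1, \ell_2)$; the only place where this could fail is the denominator $1 - x$, but on $\scr{D}_\eps$ we have $1 - x > \eps$, so each rational term is smooth and Lipschitz with constant depending only on $\eps$. This yields uniqueness and local existence of the solution with initial condition $(x(s_0), \ell_1(s_0), \ell_2(s_0)) = (\hat x(s_0), 0, 0)$ up to the stopping time $\sigma(\eps)$ defined in the excerpt.

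Next, I would verify the three standard hypotheses of the method on the stopped process $t \le T_{\scr{D}_\eps}$, where at each step $t \ge t_0$ we have $n - X(t) \ge \eps n \ge n/\log n$ for $n$ large. The Trend Hypothesis is Lemma \ref{lem:randomized_expected_differences}, where the $O(\log n / n)$ error terms let us take $\delta = O(\log n / n) = o(1)$. The Boundedness Hypothesis is Lemma \ref{lem:lipschitz_randomized}, so we may take $\beta = O(\log n)$ and exceptional probability $\gamma = O(n^{-1}) = o(n^{-1})$ uniformly over $t_0 \le t \le 3n$. The Initial Condition is handled by the claim (stated just before the lemma) that $t_0 = s_0 n + o(n)$ a.a.s.\ and $|X(t_0)/n - \hat x(s_0)| \le \lambda$ for some $\lambda = o(1)$; the auxiliary variables $L_1(t_0)$ and $L_2(t_0)$ are trivially concentrated at $0$, because \ref{alg:fully_randomized} has not yet started creating red vertices at step $t_0$. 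When $N = 0$ the initial data is literally $(0, 0, 0)$, which is even easier.

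Finally, the theorem then supplies constants such that
\[
\max\bigl\{|X(t) - x(t/n) n|,\; |L_1(t) - \ell_1(t/n) n|,\; |L_2(t) - \ell_2(t/n) n|\bigr\} = o(n)
\]
uniformly for $t_0 \le t \le (\sigma(\eps) - \delta) n$, with probability $1 - o(1)$, which is exactly the conclusion. The main (and only real) subtlety I expect is the random initial time $t_0$: one needs to invoke a ``restart'' argument, conditioning on the a.a.s.\ event that $|t_0/n - s_0|$ and $|X(t_0)/n - \hat x(s_0)|$ are at most $\lambda = o(1)$, and then apply Theorem \ref{thm:differential_equation_method} to the process shifted to start at $t_0$; by Lipschitz dependence on initial data the shifted ODE solution differs from the target $(x, \ell_1, \ell_2)$ by $o(1)$ uniformly on compact time intervals, which is absorbed into the $o(n)$ error. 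Everything else is routine verification.
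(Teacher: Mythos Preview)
Your proposal is correct and follows essentially the same approach as the paper: the lemma is stated as an immediate consequence of the differential equation method (Theorem~\ref{thm:differential_equation_method}) once the Lipschitz condition on $\scr{D}_\eps$, the Trend Hypothesis (Lemma~\ref{lem:randomized_expected_differences}), the Boundedness Hypothesis (Lemma~\ref{lem:lipschitz_randomized}), and the Initial Condition at $t_0$ have been verified. One minor slip: you write $\gamma = O(n^{-1}) = o(n^{-1})$, which is not what you mean; $\gamma = O(n^{-1}) = o(1)$ suffices for the a.a.s.\ conclusion.
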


As $\scr{D}_{\eps}\subseteq \scr{D}_{\eps'}$ for every $\eps>\eps'$, $\sigma(\eps)$ is monotonely nondecreasing as $\eps\to 0$. Thus, \begin{equation}
    \alpha^*:=\lim_{\eps \to 0+} \sigma(\eps) \label{alpha_star}
\end{equation} 
exists. It is obvious that $|L_1(t)/n|$ and $|L_2(t)/n|$ are both bounded by 1 for all $t$ and thus, when $t/n$ approaches $\alpha^*$, either $X(t)/n$ approaches 1 or $t/n$ approaches 3. Formally, we have the following proposition.
\begin{proposition}\label{p:boundary} For every $\eps>0$, there exists $\delta>0$ such that a.a.s.\ one of the following holds.
\begin{itemize}
    \item $X(t)>(1-\eps)n$ for all $ t\ge (\alpha^*-\delta)n$;
    \item $\alpha^*=3$.
\end{itemize}

\end{proposition}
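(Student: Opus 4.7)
The plan is to identify which face of the closed domain $\overline{\scr{D}_\eps}$ the ODE trajectory actually hits at $s=\sigma(\eps)$ (when $\sigma(\eps)<3$), translate that boundary value back to $X(t)$ via Lemma~\ref{lem1:concentration_random}, and then extend the conclusion to all larger $t$ using the (deterministic) monotonicity of $X(t)$.

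First I would rule out every boundary face of $\scr{D}_\eps$ except $x=1-\eps$ and $s=3$. The components $\ell_1,\ell_2$ of the solution must remain in $[0,1]$: a.a.s.\ by Lemma~\ref{lem1:concentration_random}, $\ell_i(t/n)=L_i(t)/n+o(1)$ and $0\le L_1(t)+L_2(t)\le X(t)\le n$, so letting $n\to\infty$ forces $\ell_i(s)\in[0,1]$ on $[s_0,\sigma(\eps))$; in particular $|\ell_i(s)|<2$. For $x$, the initial value $x(s_0)=\hat x(s_0)\ge 0$ combined with (\ref{ode1})---which gives $x'(s)=1-x+2(\ell_1+\ell_2)\ge 0$ whenever $x\le 1$ and $\ell_i\ge 0$---shows $x(s)\in[0,1-\eps)$ along the trajectory. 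Hence if $\sigma(\eps)<3$, the only way for the solution to leave $\scr{D}_\eps$ is via $x\to 1-\eps$, i.e.\ $\lim_{s\to\sigma(\eps)^-}x(s)=1-\eps$.

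Now fix $\eps>0$. If $\alpha^*=3$ the second bullet holds, so suppose $\alpha^*<3$. Choose $\eps'\in(0,\eps/2)$; then $\sigma(\eps')\le\alpha^*<3$, and the previous step gives $x(s)\to 1-\eps'$ as $s\to\sigma(\eps')^-$. By continuity of $x$ on $[s_0,\sigma(\eps'))$, fix $\delta>0$ small enough that $x(\sigma(\eps')-\delta)>1-\eps$. Applying Lemma~\ref{lem1:concentration_random} with parameter $\eps'$ at time $(\sigma(\eps')-\delta)n$ yields, a.a.s.,
$$
X\big((\sigma(\eps')-\delta)n\big)=x(\sigma(\eps')-\delta)\,n+o(n)>(1-\eps)n
$$
for all sufficiently large $n$. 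Since $X(t)$ is non-decreasing in $t$ (inspection of \ref{alg:fully_randomized} shows that $P_t$ only ever grows), and since $\sigma(\eps')\le\alpha^*$ implies $(\sigma(\eps')-\delta)n\le(\alpha^*-\delta)n$, we conclude that a.a.s.\ $X(t)>(1-\eps)n$ for all $t\ge(\alpha^*-\delta)n$, which is the first bullet.

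The main subtlety is justifying the invariance $\ell_i(s)\in[0,1]$ for the ODE trajectory itself; the probabilistic route above, piggy-backing on Lemma~\ref{lem1:concentration_random}, is the cleanest, but one could alternatively check it intrinsically by verifying that on each face of the box $\{\ell_1,\ell_2\ge 0,\ \ell_1+\ell_2\le x\}$ the vector field in (\ref{ode2})--(\ref{ode3}) points inward. Everything else is routine: a continuity-plus-monotonicity argument combined with the already-established concentration statement.
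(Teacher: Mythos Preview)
Your proposal is correct and follows the same line of reasoning the paper sketches in the paragraph immediately preceding the proposition: the paper simply remarks that $|L_1(t)/n|,|L_2(t)/n|\le 1$ rules out the $\ell_i$-faces of $\scr{D}_\eps$, so the trajectory can only exit through $x=1-\eps$ or $s=3$, and you have supplied the details (together with the monotonicity of $X(t)$ to extend the conclusion to all later $t$). The paper gives no formal proof beyond that sentence, so your write-up is a legitimate fleshing-out of exactly the intended argument.
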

The ordinary differential equations~(\ref{ode1})--(\ref{ode3}) do not have an analytical solution. In both cases $N=0$ and $N=100$, numerical solutions show that $\alpha^*<2.1$. (For $N=0$, $\alpha^* \approx 2.07721$.) Thus, by the end of the execution of \ref{alg:fully_randomized}, there are $\eps n$ unabsorbed vertices remaining, for some $\eps=o(1)$.

\subsection{A Clean-up Algorithm} \label{sec:clean_up}

Suppose that we are presented a path $P$ on $(1-\eps)n$ vertices of $[n]$, where $0 < \eps =\eps(n) < 1/1000$.
The assumption on $\eps$ is a mild but convenient assumption. We will apply the argument for $\eps=o(1)$. In this section, we provide an algorithm for the semi-random graph process which absorbs the remaining $\eps n$ vertices into $P$ to form a Hamiltonian path, after which a Hamiltonian cycle can be constructed. The whole procedure takes $O(\sqrt{\eps}n + n^{3/4}\log^2 n) = o(n)$ further steps in the semi-random graph process. Moreover, the algorithm is self-contained in that it only uses the edges of $P$ in its execution.

\begin{lemma}[Clean-up Algorithm] \label{lem:clean_up}
Let $0 < \eps = \eps(n) < 1/1000$, and suppose that $P$ is a path on $(1-\eps)n$ vertices
of $[n]$. Then, given $P$ initially, there exists a strategy for the semi-random graph process which builds
a Hamiltonian cycle from $P$ in $O(\sqrt{\eps}n + n^{3/4}\log^2 n)$ steps a.a.s.
\end{lemma}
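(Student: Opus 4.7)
The plan is a two-phase strategy: an \emph{absorption phase} that reduces $|U_t|$ to $0$, followed by a short \emph{closing phase} that turns the resulting Hamiltonian path into a cycle. Both phases are self-contained, using only edges of the path $P$, as required. In the absorption phase we reuse the red-vertex mechanism of Section~\ref{warm-up-upper-bound}: if $u_{t+1}$ is unsaturated, greedy extend; if $u_{t+1}$ is at path-distance $1$ from a red vertex $x$, augment through $x$; if $u_{t+1}$ lies on a permissible uncoloured path vertex, pick $v_{t+1}$ uniformly at random from $U_t$ and colour the new edge red; otherwise pass.

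Writing $k(t)=|U_t|$ and $R(t)=|\scr{L}_t|$, a case analysis like that behind Lemma~\ref{lem:randomized_expected_differences} yields
\[
\E[\Delta k(t)\mid H_t] = -\frac{k(t)+2R(t)}{n}+O\!\left(\frac{\log n}{n}\right),\qquad \E[\Delta R(t)\mid H_t] = 1 - O\!\left(\frac{k(t)+R(t)}{n}\right),
\]
with one-step changes a.a.s.\ of size $O(\log n)$ (by the argument of Lemma~\ref{lem:lipschitz_randomized}). The associated ODE system $k'=-k-2R$, $R'=1$ with $k(0)=\eps n$, $R(0)=0$ gives $R(t)\sim t$ and $k(t)\sim\eps n-t^{2}/n$, so $k$ falls below any prescribed small threshold in $\Theta(\sqrt{\eps}\,n)$ steps. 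Applying the differential equation method (or Azuma--Hoeffding directly on the martingale differences) shows that a.a.s.\ $k(t)\le n^{3/4}\log n$ within $O(\sqrt{\eps}\,n+n^{3/4})$ steps.

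Once $k(t)\le n^{3/4}\log n$ the continuum approximation is no longer reliable, so we switch to a finishing subroutine for the last handful of unsaturated vertices. We first spend $O(n^{3/4}\log n)$ rounds building a pool of $\Theta(n^{3/4}\log n)$ red vertices, each paired to one of the remaining unsaturated vertices; in each such round a new red is created with probability close to $1$, so the pool fills by a Chernoff bound. A further $O(n^{3/4}\log^{2}n)$ rounds then suffice, again by Chernoff, to trigger an augmenting hit at every remaining unsaturated vertex, producing a Hamiltonian path a.a.s.

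The resulting Hamiltonian path $P$ with endpoints $a,b$ must finally be closed into a cycle. Waiting for $\{u_{t+1},v_{t+1}\}=\{a,b\}$ naively takes $\Theta(n)$ rounds, so instead we use P\'osa rotations: red edges planted at the current endpoint let us rotate $P$ into a Hamiltonian path with a different endpoint when a suitable $u_{t+1}$ arrives, enlarging the set $A$ of vertices that can serve as an endpoint. Once $|A|\ge n^{3/4}\log n$ on both sides, an additional $O(n^{3/4}\log n)$ rounds a.a.s.\ produce a closing square--circle pair between two current endpoints, completing the Hamiltonian cycle. The main obstacle is orchestrating the last-vertex clean-up and the closing phase so that everything fits inside the $O(n^{3/4}\log^{2}n)$ additive budget: concentration weakens as the pools shrink, and the P\'osa rotation argument needs disjoint endpoint sets large enough to be hit by a random square--circle pair within the remaining budget, which forces a delicate choice of thresholds and pool sizes.
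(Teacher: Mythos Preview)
Your two-phase plan is sound, but the absorption-phase analysis has a genuine gap. The expression $\E[\Delta R(t)\mid H_t]=1-O\big((k(t)+R(t))/n\big)$ omits the dominant destruction term: whenever a vertex $r\in U_t$ is absorbed (by extension or augmentation), \emph{every} red edge whose circle is $r$ is uncoloured, and since circles lie u.a.r.\ in $U_t$ this kills $R(t)/k(t)$ red vertices in expectation per absorption. The correct drift is
\[
\E[\Delta R(t)\mid H_t]\;=\;1-\frac{2R(t)^{2}}{n\,k(t)}-O\!\Big(\frac{k(t)+R(t)}{n}\Big),
\]
and along the trajectory you write down the term $2R^{2}/(nk)$ is already $\Theta(1)$ (e.g.\ at $t=\tfrac12\sqrt{\eps}\,n$ it equals $2/3$). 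Hence $R$ saturates at $\Theta(\sqrt{nk})$ rather than growing like $t$, and your ODE solution $R(t)\sim t$, $k(t)\sim\eps n-t^{2}/n$ is wrong even though the time scale $\Theta(\sqrt{\eps}\,n)$ happens to be right. There are further obstacles to the DE route: Lemma~\ref{lem:lipschitz_randomized} requires $k(t)\ge n/\log n$, which already fails at time $0$ once $\eps=o(1/\log n)$; the $R^{2}/(nk)$ term is not Lipschitz near $k=0$; and with $\eps=o(1)$ the entire trajectory $(k/n,R/n)$ lives in an $o(1)$-neighbourhood of the origin, where the $o(n)$ error of Theorem~\ref{thm:differential_equation_method} swamps the signal. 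Your finishing subroutine is also under-budgeted as stated: a pool of $R=\Theta(n^{3/4}\log n)$ red vertices gives augmentation rate $2R/n=\Theta(n^{-1/4}\log n)$ per step, so absorbing the $n^{3/4}\log n$ remaining vertices takes $\Theta(n)$ steps, not $O(n^{3/4}\log^{2}n)$.

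The paper avoids differential equations entirely. It runs a sequence of iterations, each halving the number $j$ of unsaturated vertices: (i) uncolour everything; (ii) spend $m=\Theta(\sqrt{nj})$ steps building a fresh reservoir of red edges, placing circles by \emph{minimum} red degree so that every unsaturated vertex acquires at most $m/j+1$ red neighbours---this deterministic bound replaces your Lipschitz/ODE machinery; (iii) absorb $j/2$ vertices, noting that step (ii) guarantees at most $m/j+1$ reds are destroyed per absorption, so at least $0.3m$ reds survive throughout and the augmentation rate stays $\Omega(m/n)$. A single Chernoff bound handles each iteration, and the geometric sum $\sum_k m_k$ gives the $O(\sqrt{\eps}\,n)$ term. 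Once $j\le n^{1/4}$, iterations absorb one vertex at a time with $m=n^{1/2}$, contributing the $O(n^{3/4}\log^{2}n)$ term. For the closing step the paper does not need iterated P\'osa rotations either: send $n^{1/2}$ circles to one endpoint $u$ and colour the path-neighbour of each landing square, then send $n^{1/2}\log^{2}n$ circles to the other endpoint $v$ and wait for a square to hit a coloured vertex. A single rotation then closes the cycle.
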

\begin{remark}
The constant hidden in the $O(\cdot)$ notation does not depend on $\eps$. The strategy used in the clean-up algorithm is similar to that in~\ref{alg:fully_randomized} but the analysis is done in a much less accurate way, as we only need to prove an $o(n)$ bound on the number of steps required to absorb $\eps n$ vertices. The proof is presented in Appendix~\ref{sec:lemma_proofs}.
\end{remark}

By setting $N=0$ we immediately get an algorithm which a.a.s.\ constructs a Hamiltonian cycle in $\hat\alpha n$ steps, where $\hat\alpha \le 2.07721$. To obtain the better bound in Theorem~\ref{thm:main_upper_bound}, we set $N=100$, and the execution of~\ref{alg:degree_greedy} will be analysed in the next subsection.

\begin{theorem}\label{thm:warm_up}
$\tau_{\mathtt{HAM}} \le \hat{\alpha} \le 2.07721$, where $\hat{\alpha}$ is defined in~(\ref{alpha_star}) with initial conditions for~(\ref{ode1})--(\ref{ode3}) set by $x(0)=\ell_1(0)=\ell_2(0)=0$.
\end{theorem}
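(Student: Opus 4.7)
The plan is to take $N = 0$, so that \ref{alg:fully_randomized} is executed from the very first step with starting values $X(0) = L_1(0) = L_2(0) = 0$. Under this specialisation the parameter $t_0$ of the previous subsection is simply $0$. Lemmas~\ref{lem:lipschitz_randomized} and~\ref{lem:randomized_expected_differences} supply the boundedness and trend hypotheses for the differential equation method, and Lemma~\ref{lem1:concentration_random} then yields that, a.a.s.\ and uniformly for $0 \le t \le (\sigma(\eps) - \delta)n$, the scaled random variables $X(t)/n$, $L_1(t)/n$, $L_2(t)/n$ stay within $o(1)$ of the solution $(x(s), \ell_1(s), \ell_2(s))$ of the ODE system~(\ref{ode1})--(\ref{ode3}) with initial conditions $x(0) = \ell_1(0) = \ell_2(0) = 0$.

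Next I would study this ODE numerically to pin down $\hat\alpha := \lim_{\eps \to 0^+}\sigma(\eps)$, as defined in~(\ref{alpha_star}). Since~(\ref{ode1})--(\ref{ode3}) admits no closed-form solution, the bound $\hat\alpha \le 2.07721$ must come from a rigorous numerical integration (using the Julia implementation referenced in the introduction). The output of this integration also shows that the trajectory exits $\scr{D}_\eps$ through the face $\{x = 1 - \eps\}$ strictly before $s = 3$, with $\ell_1$ and $\ell_2$ remaining comfortably inside $(-2,2)$ throughout. Hence the second bullet of Proposition~\ref{p:boundary} is excluded and the first applies: for every $\eps > 0$ there exists $\delta > 0$ such that a.a.s.\ $X(t) > (1-\eps)n$ for all $t \ge (\hat\alpha - \delta) n$.

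Finally I would splice on the clean-up step. Fixing a function $\eps = \eps(n) = o(1)$ that tends to $0$ sufficiently slowly and satisfies $\eps < 1/1000$, after $\lceil \hat\alpha n \rceil$ rounds the path $P_{\lceil \hat\alpha n\rceil}$ a.a.s.\ covers all but at most $\eps n$ vertices. Lemma~\ref{lem:clean_up} then extends this path to a Hamiltonian cycle in a further $O(\sqrt{\eps}\, n + n^{3/4}\log^2 n) = o(n)$ rounds a.a.s. Concatenating the two phases shows that a Hamiltonian cycle is a.a.s.\ constructed within $\hat\alpha n + o(n) \le 2.07721\, n + o(n)$ rounds, which, together with the definition of $\tau_{\mathtt{HAM}}$, gives $\tau_{\mathtt{HAM}} \le \hat\alpha \le 2.07721$.

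The principal obstacle is the numerical part: the proof hinges on verifying, rigorously enough for a mathematical statement, that the integral curve of~(\ref{ode1})--(\ref{ode3}) with zero initial data exits $\scr{D}_\eps$ through $\{x = 1 - \eps\}$ at some $s \le 2.07721$, rather than blowing past $s = 3$ or violating one of the $|\ell_i| < 2$ bounds first. Everything else is essentially bookkeeping: confirming Lipschitz regularity of the right-hand side of~(\ref{ode1})--(\ref{ode3}) on $\scr{D}_\eps$ (already remarked on in Section~\ref{warm-up-upper-bound}), and chaining the concentration statement of Lemma~\ref{lem1:concentration_random} with Lemma~\ref{lem:clean_up} while managing the two independent $o(1)$ error terms.
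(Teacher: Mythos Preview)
Your proposal is correct and follows exactly the paper's approach: the paper's proof is the one-liner ``This follows by Proposition~\ref{p:boundary}, the numerical value of $\alpha^*$, and Lemma~\ref{lem:clean_up},'' and your write-up is simply a careful unpacking of those three ingredients in the $N=0$ case. Your identification of the numerical verification as the only non-routine step matches the paper's own discussion preceding the theorem.
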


\proof This follows by Proposition~\ref{p:boundary}, the numerical value of $\alpha^*$, and Lemma~\ref{lem:clean_up}.\qed

\subsection{A Degree-Greedy Algorithm} \label{sec:degree_greedy}

Let us suppose that after $t \ge 0$ steps, we have constructed the graph $G_t$  which contains the path $P_t$.
As before, our algorithm uses path augmentations, and we colour the edges of $G_t$
to help keep track of when these augmentations can be made. We now use two colours, namely red and blue, to distinguish between edges which are added randomly (red) and greedily (blue). Our blue edges will be chosen so as to minimize the number of blue edges destroyed by path augmentations in future rounds. 

We say that $x \in V(P_t)$ is \textbf{blue}, provided
it is adjacent to a single blue edge of $G_t$, and no red edge. Similarly, $x \in V(P_t)$ is
\textbf{red}, provided it is adjacent to a single red edge of $G_t$, and no blue edge.
Finally, we say that $x \in V(P_t)$ is \textbf{magenta (mixed)}, provided it is adjacent to a single red edge,
and a single blue red. We denote the blue vertices, red vertices, and magenta (mixed) vertices by $\scr{B}_t, \scr{R}_t$
and $\scr{M}_t$, respectively, and define $\scr{L}_t := \scr{B}_t \cup \scr{R}_t \cup \scr{M}_t$ to be
the \textbf{coloured} vertices.
By definition, $\scr{B}_t, \scr{R}_t$ and $\scr{M}_t$ are disjoint. Once again,
we denote our unsaturated vertices by $U_t$, and also maintain a set of
\textbf{permissible} vertices $\scr{Q}_t$ which indicate which saturated vertices
are allowed to be coloured blue. Specifically, using the same reasoning as before,
we ensure the following:
\begin{enumerate}
    \item[(i)] $|\scr{Q}_t| = |V(P_t)| - 5|\scr{L}_t|$. \label{eqn:greedy_permissible_size}
    \item[(ii)] If $\scr{L}_t \neq \emptyset$, then each $x \in \scr{Q}_t$ satisfies $d_{P_t}(x,\scr{L}_t) \ge 3$. \label{eqn:greedy_permissible_distance}
\end{enumerate}


Upon the arrival of $u_{t+1}$, there are five main cases our algorithm must handle. The first
two cases involve extending the length of the path, whereas the latter three describe what to do
when it is not possible to extend the path in the current round.
\begin{enumerate}
    \item If $u_{t+1}$ lands within $U_t$, then greedily extend $P_{t}$. \label{eqn:greedy_extend}
    \item If $u_{t+1}$ lands at path distance one from $x \in \scr{L}_t$, then augment $P_t$ via a coloured edge of $x$,
    where a blue edge is taken over a red edge if possible. \label{eqn:path_augment}
    \item If $u_{t+1}$ lands in $\scr{Q}_t$, then choose $v_{t+1}$ u.a.r.\ amongst those vertices of $U_t$ with \textit{minimum} blue degree. The edge $u_{t+1} v_{t+1}$ is then coloured blue, and a single blue vertex is created. \label{eqn:blue_vertex} 
    \item If $u_{t+1}$ lands in $\scr{R}_t$, then choose $v_{t+1}$ u.a.r.\ amongst those vertices of $U_t$ with minimum blue degree. The edge $u_{t+1} v_{t+1}$ is then coloured blue, and a single red vertex is converted to a magenta (mixed) vertex. \label{eqn:red_vertex} 
    \item If $u_{t+1}$ lands in $\scr{B}_t$, then choose $v_{t+1}$ u.a.r.\ amongst $U_t$ and colour $u_{t+1} v_{t+1}$ red. This case converts a blue vertex to a magenta vertex. \label{eqn:magenta_vertex}
\end{enumerate}
In all the remaining cases, we choose $v_{t+1}$ uniformly at random, and interpret
the algorithm as \textit{passing} on the round. Below we formally describe step $t+1$ of the algorithm upon receiving $u_{t+1}$:
\begin{varalgorithm}{$\mathtt{DegreeGreedy}$}
\caption{Step $t+1$} 
\label{alg:degree_greedy}
\begin{algorithmic}[1]
\If{$u_{t+1} \in U_t$} \Comment{greedily extend the path}
\State Let $v_{t+1}$ be an arbitrarily chosen endpoint of $P_t$. 
\State Set $V(P_{t+1}) = V(P_t) \cup \{u_{t+1}\}$, $E(P_{t+1}) = E(P_t) \cup \{u_{t+1} v_{t+1} \}$. 
\State Uncolour all of the edges adjacent to $u_{t+1}$.  \label{line:unsaturated_uncolour}

\ElsIf{$d(u_{t+1}, \scr{L}_t) =1$}     \Comment{path augment via coloured vertices}
\State Let $x \in \scr{L}_t$ be the (unique) coloured vertex adjacent to $u_{t+1}$

\If{$x$ is red}
\State Denote $x y \in E(G_t)$ the red edge of $x$, where $y \in U_t$. 
\Else{}         \Comment{$x$ is blue or magenta}
\State Denote $x y \in E(G_t)$ the blue edge of $x$, where $y \in U_t$. 
\EndIf

\State Set $v_{t+1} = y$.
\State Set $V(P_{t+1}) = V(P_t) \cup \{ y \}$ and $E(P_{t+1}) = E(P_t) \cup \{u_{t+1} y ,x y\} \setminus \{u_{t+1}x \}$.
\State Uncolour all of the edges adjacent to $y$. \label{line:saturated_uncolour}

\ElsIf{$u_{t+1} \in \scr{Q}_t \cup \scr{R}_t$}   \Comment{construct coloured vertices}
\State Choose $v_{t+1}$ u.a.r.\ from the vertices of $U_t$ of minimum blue degree.
\State Colour $u_{t+1}v_{t+1}$ blue.       \Comment{create a blue or magenta vertex} 
\State Set $P_{t+1} = P_t$. 

\ElsIf{$u_{t+1} \in \scr{B}_t$}
\State Choose $v_{t+1}$ u.a.r.\ from $U_t$.
\State Colour the edge $u_{t+1} v_{t+1}$ red.  \Comment{create a magenta vertex}
\State Set $P_{t+1} = P_t$. 

\Else{} \Comment{pass on using edge $u_{t+1} v_{t+1}$}
\State Choose $v_{t+1}$ u.a.r.\ from $[n]$. 

\State Set $P_{t+1} = P_t$.

\EndIf
\State Update $\scr{Q}_{t+1}$ such that $|\scr{Q}_{t+1}| = |V(P_{t+1})| - 5|\scr{L}_{t+1}|$.    \Comment{update permissible vertices}
\end{algorithmic}
\end{varalgorithm}
\newpage
As in \ref{alg:fully_randomized}, we ensure that all of the algorithm's coloured vertices are at path distance at least $3$ from each other,
and we define a coloured vertex to be \textbf{well spaced} in the same way. Note that red vertices are only created when the blue edges of magenta vertices are uncoloured as a side effect of path extensions and augmentations (see lines \eqref{line:unsaturated_uncolour} and \eqref{line:saturated_uncolour}).

For each $t \ge 0$, define the random variables $X(t) := |V(P_t)|$, $B(t):= |\scr{B}_t|$, 
$R(t) := |\scr{R}_t|$, $M(t) := |\scr{M}_t|$ and $L(t):=|\scr{L}_t|=B(t)+R(t)+M(t)$. 
For each $q \ge 0$ and $t \ge 0$, define $D_{q}(t)$ to be the number of unsaturated vertices adjacent to precisely
$q$ blue edges.  We define the stopping time $\tau_{q}$ to
be the smallest $t \ge 0$ such that $D_{j}(t) = 0$ for all $j<q$, and $D_{q}(t) > 0$. It is obvious that $\tau_q$ is well-defined and is non-decreasing in $q$. By definition, $\tau_{0}=0$. Let us refer to \textbf{phase} $q$ as those $\tau_{q-1} \le t < \tau_{q}$. Observe that during phase $q$, each unsaturated vertex has blue degree $q-1$ or $q$.

\subsection{Analyzing phase $q$}

Suppose that $\tau_{q-1} \le t < \tau_{q}$. It will be convenient to denote $D(t):=D_{q-1}(t)$. 
Given $k_1, k_2 \ge 0$, we say that $y \in U_t$ is of \textbf{type} $(k_1,k_2)$, provided
it is adjacent to $k_1$ blue edges within $\scr{B}_t$ and $k_2$ blue edges within $\scr{M}_t$. Similarly,
$x \in \scr{B}_t \cup \scr{M}_t$ is of type $(k_1,k_2)$, provided its (unique) \textit{blue} edge connects to an unsaturated
vertex of type $(k_1,k_2)$. We denote the number of unsaturated vertices of type $(k_1,k_2)$ by $C_{k_1,k_2}(t)$, the blue vertices of type $(k_1,k_2)$ by $B_{k_1,k_2}(t)$, and the magenta (mixed) vertices of type $(k_1,k_2)$ 
by $M_{k_1,k_2}(t)$. Observe that $B_{k_1,k_2}(t) = k_1 \cdot C_{k_1,k_2}(t)$
and $M_{k_1,k_2}(t) = k_2 \cdot C_{k_1,k_2}(t)$. Moreover,
$D_{j}(t) = \sum_{\substack{k_1, k_2: \\ k_{1} + k_{2} = j}} C_{k_1,k_2}(t)$.

In Appendix \ref{sec:inductive_functions}, we inductively define the functions $x, r$ and
$c_{k_1,k_2}$ for $k_1 + k_2 \ge 0$, as well as a constant $\sigma_q \ge 0$, such that
the following lemma holds:

\begin{lemma}\label{lem:inductive}
A.a.s.\ $\tau_{q} \sim \sigma_{q} n$ for every $0 \le q\le N$.\footnote{For functions $f = f(n)$ and $g=g(n)$, $f \sim g$ is shorthand for $f =(1+ o(1)) g$.} Moreover, at step $\tau_{q}$, a.a.s.
\begin{eqnarray*}
&&X(\tau_{q}) \sim x(\sigma_{q})n, \quad R(\tau_{q}) \sim r(\sigma_{q})n, \\
&&C_{k_1,k_2}(\tau_{q}) \sim c_{k_1,k_2}(\sigma_{q})n \quad \mbox{for all $(k_1,k_2)$ where $k_1+k_2=q$.}
\end{eqnarray*}
\end{lemma}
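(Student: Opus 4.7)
The plan is to prove the lemma by induction on $q$. The base case $q=0$ is immediate: $\tau_0 = 0$ and at step zero $X(0)=R(0)=0$ while $C_{0,0}(0) = n$ with all other $C_{k_1,k_2}(0) = 0$, matching the initial conditions $x(0)=r(0)=0$, $c_{0,0}(0)=1$ set in Appendix~\ref{sec:inductive_functions}. Assume inductively that $\tau_{q-1} \sim \sigma_{q-1} n$ a.a.s.\ and that $X(\tau_{q-1})$, $R(\tau_{q-1})$, and each $C_{k_1,k_2}(\tau_{q-1})$ with $k_1+k_2 = q-1$ are concentrated around the claimed values.

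For phase $q$ (those steps $\tau_{q-1} \le t < \tau_q$), every unsaturated vertex has blue degree in $\{q-1, q\}$, so the relevant random variables are $X(t)$, $R(t)$, and $C_{k_1,k_2}(t)$ for $k_1+k_2 \in \{q-1,q\}$; the counts $B_{k_1,k_2}(t)$ and $M_{k_1,k_2}(t)$, as well as $B(t)$ and $M(t)$, are then determined by the identities $B_{k_1,k_2}(t) = k_1 \cdot C_{k_1,k_2}(t)$ and $M_{k_1,k_2}(t) = k_2 \cdot C_{k_1,k_2}(t)$. I would derive a trend hypothesis $\E[\Delta Y(t) \mid H_t]$ for each tracked variable by case analysis on where $u_{t+1}$ lands, paralleling Lemma~\ref{lem:randomized_expected_differences} but with two new ingredients. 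First, the degree-greedy rule: whenever a blue edge is created, $v_{t+1}$ is uniform among unsaturated vertices of minimum blue degree $q-1$, so a fixed such vertex of type $(k_1,k_2)$ is selected with probability $1/D_{q-1}(t)$, where $D_{q-1}(t) = \sum_{k_1+k_2=q-1} C_{k_1,k_2}(t)$, and the selected vertex's type coordinate $(k_1$ or $k_2$) increments by one in closed form. Second, path augmentations and extensions uncolour all edges incident to the newly absorbed unsaturated vertex, reclassifying the coloured path vertices that had blue edges pointing to it; these reclassifications contribute $O(1/n)$ terms to the trend equations that can be computed from the tracked proportions.

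The boundedness hypothesis follows from the argument in Lemma~\ref{lem:lipschitz_randomized}: since $N$ is a fixed constant and $\tau_q = O(n)$, each unsaturated vertex accumulates at most $O(\log n)$ blue edges w.h.p., so one-step changes in all tracked variables are $O(\log n)$. Wormald's differential equation method then yields concentration of the scaled random variables around the solution of the induced ODE system on a bounded domain where all $c_{k_1,k_2}(s)$ are nonnegative and $\sum_{k_1+k_2=q-1}c_{k_1,k_2}(s) > 0$, with initial conditions at $s=\sigma_{q-1}$ matching the inductive hypothesis. The stopping time $\tau_q$ corresponds to the first $s > \sigma_{q-1}$ at which $\sum_{k_1+k_2=q-1} c_{k_1,k_2}(s) = 0$, and this value is the $\sigma_q$ defined in Appendix~\ref{sec:inductive_functions}; concentration gives $\tau_q \sim \sigma_q n$ and the claimed approximations at step $\tau_q$, and a union bound over the $N+1$ phases (a constant number) preserves the a.a.s.\ conclusion. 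The principal obstacle is the trend hypothesis itself: the transitions couple the many $C_{k_1,k_2}$ variables through the degree-greedy denominator $D_{q-1}(t)$ and through the cascade of uncolourings that accompany path augmentations, so the bookkeeping is intricate. Verifying that the ODE solution remains in the stated domain up to $\sigma_q$ (so that the differential equation method applies cleanly up to $\tau_q$) is a secondary but essential ingredient, handled by the inductive construction in Appendix~\ref{sec:inductive_functions}.
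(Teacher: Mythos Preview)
Your inductive scheme and use of the differential equation method match the paper's approach. The one substantive gap is in the passage from the DE-method concentration to the conclusion $\tau_q \sim \sigma_q n$. The Lipschitz domain for the phase-$q$ system must bound $d(s) = \sum_{k_1+k_2=q-1} c_{k_1,k_2}(s)$ away from zero, since $D(t)$ appears as a denominator in the trend equations; hence Theorem~\ref{thm:differential_equation_method} tracks the process only until $t \approx \sigma(\eps) n$ where $d(\sigma(\eps)) = \eps$, with $\sigma(\eps) \uparrow \sigma_q$ as $\eps \downarrow 0$. At that moment there may still be up to $\eps n$ unsaturated vertices of blue degree $q-1$, and the DE method says nothing about how long it takes for $D(t)$ to actually hit zero. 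The paper closes this with a short tail argument: numerically $(X(t) - 5L(t))/n \ge p$ for some absolute constant $p > 0$ near $\sigma_q$, so at each step some blue-degree-$(q-1)$ vertex receives a new blue edge with probability at least $p$, and a Chernoff bound shows the remaining $\eps n$ such vertices are eliminated in $O(\eps n / p) = o(n)$ further steps, whence $|\tau_q/n - \sigma_q| = o(1)$.

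You should also record that one must check which face of the domain the trajectory reaches first. The paper invokes the numerical solution to verify $\sigma_q < 3$ and $x(\sigma_q) < 1$, so that it is indeed $d \to 0$ (rather than $x \to 1$ or $s \to 3$) that terminates phase $q$; your sketch defers this to Appendix~\ref{sec:inductive_functions} but does not flag that it is established numerically rather than analytically.
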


Although the method in the proof for Lemma~\ref{lem:inductive} is similar to that of Lemmas~\ref{lem:lipschitz_randomized},~\ref{lem:randomized_expected_differences},~\ref{lem1:concentration_random} and Proposition~\ref{p:boundary}, the analysis is much more intricate and involved. We postpone the details to Appendix~\ref{sec:inductive}, and now complete the proof of Theorem~\ref{thm:main_upper_bound}.

\begin{proof}[Proof of Theorem~\ref{thm:main_upper_bound}] Set $N=100$. By Lemma~\ref{lem:inductive}, the execution of \ref{alg:degree_greedy} ends at some step $t_0\sim \sigma_N n$. Moreover, $X(t_0)\sim x(\sigma_N)n$. Numerical computation shows that $\sigma_N\approx 2.00189$. 
Next, the algorithm executes \ref{alg:fully_randomized}. Let $\alpha^*$ be as defined in (\ref{alpha_star}) where the initial conditions to the differential equations (\ref{ode1})--(\ref{ode3}) are set by $s_0=\sigma_N$, $x(s_0)=x(\sigma_N) \approx 0.99991$, 
and $\ell_1(s_0)=\ell_2(s_0)=0$. Numerical computations show that $\alpha^*\approx 2.01678$. By  
Proposition~\ref{p:boundary} and the fact that  $\alpha^*<3$, the execution of the first two stages (\ref{alg:degree_greedy} and \ref{alg:fully_randomized}) finishes at some step $(\alpha^*+o(1))n$, and the number of unsaturated vertices remaining is $o(n)$. Finally, the clean-up algorithm constructs a Hamiltonian cycle with an additional $o(n)$ steps by Lemma~\ref{lem:clean_up}. The theorem follows.
\end{proof}

\section{Proof of Theorem~\ref{thm:main_lower_bound}}
Suppose $G_t$ has a Hamiltonian cycle $H_t = H$ after $t \ge 0$ steps. Recall that
for the (directed) semi-random edge $(u_i,v_i)$, we refer to $u_i$ as its square and $v_i$ as its circle. 
We begin with the following observations:
\begin{enumerate}[label=(\subscript{O}{{\arabic*}})]
\item $H$ uses exactly $n$ squares;     \label{eqn:exact_squares}
    \item $H$ uses at most $2$ squares on each vertex;  \label{eqn:two_squares}
    \item Suppose $(u_i,v_i)$ is an edge of $G_t$, and $v_i$ received at least two squares. Then, either $H$ uses at most one square on $v_i$, or $H$ does not contain the edge $(u_i,v_i)$.  \label{eqn:disallow_edge}
\end{enumerate}
The first two observations above are obvious. For \ref{eqn:disallow_edge}, notice that if $H$ uses exactly $2$ squares on $v_i$, then these $2$ squares correspond to 2 edges in $H$ that are incident to $v_i$. Moreover, neither of these edges can be $(u_i,v_i)$, as $u_i$ is the square of $(u_i,v_i)$. Thus, the edge $(u_i,v_i)$ cannot be used by $H$ as $v_i$ has degree 2 in $H$.

Define $Z_x$ as the number of squares on vertex $x \in [n]$, the observation \ref{eqn:two_squares} above indicates the consideration of the random variable 
\[
Z=\sum_{x=1}^n \left( \bm{1}_{Z_x= 1} + 2\cdot \bm{1}_{Z_x\ge 2}\right)
= 2n - \sum_{x=1}^n \left( 2\cdot \bm{1}_{Z_x= 0} + \bm{1}_{Z_x= 1} \right),
\]
which counts the total number of squares that can possibly contribute to $H$, truncated at 2 for each vertex. Observation \ref{eqn:disallow_edge} above indicates the consideration of the following two sets of structures:

Let $\W_1$ be the set of pairs of vertices $(x,y)$ at time $t$ such that
\begin{enumerate}
    \item[(a)] $x$ receives its first square in some step $i < t$, and $y$ receives the corresponding circle in the same step;
    \item[(b)] no more squares land on $x$ after step $i$;
    \item[(c)] at least two squares land on $y$ after step $i$.
\end{enumerate}

Let $\W_2$ be the set of pairs of vertices $(x,y)$ at time $t$ such that
\begin{enumerate}
    \item[(a)] $x$ receives its first square in some step $i < t$, and $y$ receives the corresponding circle in the same step;
    \item[(b)] exactly one more square lands on $x$ either before or after step $i$;
    \item[(c)] at least two squares land on $y$ after step $s$.
\end{enumerate}

Note that for every $(x,y)\in \W_1$, at most 2 squares on $x$ and $y$ together can be used in $H$, although $x$ and $y$ together contribute $3$ to the value of $Z$. Similarly, for every $(x,y)\in \W_2$, at most 3 squares on $x$ and $y$ together can be used in $H$, although $x$ and $y$ together contribute $4$ to the value of $Z$.

Therefore, the total number of squares contributing to $H$ is at most $Z-|\W_1|-|\W_2|+W$, where $W$ accounts for double counting, which sometimes happens when there are $(x_1,y_1), (x_2,y_2)\in \W_1\cup \W_2$ where $\{x_1,y_1\}\cap \{x_2,y_2\}\neq \emptyset$. 
More precisely, let 
\begin{eqnarray*}
\T_1&=&\{((x_1,y_1),(x_2,y_2))\in \W_1\times \W_2:\  y_1=x_2\}\\
\T_2&=&\{((x_1,y_1),(x_2,y_2))\in \W_2\times \W_2:\ y_1=x_2\}.
\end{eqnarray*}
Then, $W:=|\T_1|+|\T_2|$.\footnote{Note that the cases where $((x_1,y_1),(x_2,y_2))\in \W_1\times \W_1$ such that $y_1=y_2$ and $((x_1,y_1),(x_2,y_2))\in \W_2\times \W_2$ such that  $y_1=x_2$ do not cause double counting.}

The random variable $Z$ is well understood. From the limiting Poisson distribution of the number of squares in a single vertex, we immediately get that a.a.s. $Z\sim (2-2e^{-s}-e^{-s}s)n$ for $s:=t/n$.

We will estimate the expectation of $|\W_1|, |\W_2|, |\T_1|, |\T_2|$ as well as the concentration of these random variables. However, concentration may fail if the semi-random process uses a strategy which places many circles on a single vertex. Intuitively, placing many circles on a single vertex is not a good strategy for quickly building a Hamiltonian cycle, as it wastes many edges. To formalise this idea, let $\mu:=\sqrt{n}$ (indeed, choosing any $\mu$ such that $\mu\to \infty$ and $\mu=o(n)$ will work). We say that a strategy for the semi-random process is $\mu$-\textbf{well-behaved} up until step $t$, if no vertex receives more than $\mu$ circles in the first $t$ steps. In~\cite[Definition 3.2 -- Proposition 3.4]{gao2021perfect}, it was proven that it is sufficient to consider $\mu$-well-behaved strategies in the first $t = O(n)$ steps for establishing a lower bound on the number of steps needed to build a perfect matching. These definitions and proofs can be easily adapted for building Hamilton cycles in an obvious way. We thus omit the details and only give a high-level explanation below.

The key idea is that within $t = O(n)$ steps of any semi-random process, the number of vertices of in-degree greater than $\mu$ is at most $O(n/\mu)=o(n)$. Therefore, if a Hamiltonian cycle $C$ is built in $t$ steps, then the subgraph $H$ of $C$ induced by the set $S$ of vertices of in-degree at most $\mu$ in $G_t$ is a collection of paths spanning all vertices in $S$ which must also contain $n-O(n/\mu)=(1-o(1))n$ edges. We call such a pair $(S,H)$ an \textbf{approximate Hamiltonian cycle}. It follows from the above argument that it takes at least as long time to build a Hamiltonian cycle as to build an approximate Hamiltonian cycle. It is then easy to show by a coupling argument that if a strategy builds an approximate Hamiltonian cycle in $t = O(n)$ steps, then there exists a well-behaved strategy that builds an approximate Hamiltonian cycle in $t$ steps as well. Note that observations \ref{eqn:two_squares}--\ref{eqn:disallow_edge} hold for approximate Hamiltonian cycles, and \ref{eqn:exact_squares} holds for approximate Hamiltonian cycles with $n$ replaced by $(1-o(1))n$. Thus, no approximate Hamiltonian cycles can be built until step $Z-|\W_1|-|\W_2|+W\ge (1-o(1))n$. We now estimate the sizes of $\W_1$, $\W_2$, $\T_1$, and $\T_2$ in the semi-random process when executing a well-behaved strategy $\scr{S}$. Crucially, the sizes of these sets do \textit{not} rely on the decisions made by $\scr{S}$. Recall that $(G^{{\mathcal S}}_{s})_{s\ge 0}$ denotes the sequence of graphs produced by ${\mathcal S}$. 

\begin{lemma}
\label{lem:f}
Suppose ${\mathcal S}$ is $\mu$-well-behaved. For every $t=\Theta(n)$, the following  a.a.s.\ holds in $G^{{\mathcal S}}_{t}$, \[
Z-|\W_1|-|\W_2|+W\sim f(s)n,
\]
where $s:=t/n$ and $f(s)$ is defined in Theorem~\ref{thm:main_lower_bound}.
\end{lemma}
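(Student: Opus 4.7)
The plan is to estimate the first moment of each of $Z$, $|\W_1|$, $|\W_2|$, $|\T_1|$ and $|\T_2|$, and then prove concentration around these means. The main enabling observation is that the squares $u_1, u_2, \ldots$ are u.a.r.\ and independent of the strategy, so any event that depends only on squares arriving after some step $i$ has a probability that does not depend on the strategy's earlier decisions: given the history through step $i$ (which in particular fixes $v_i$), the $u_j$ for $j > i$ are iid uniform over $[n]$, and the number of squares on any prescribed vertex in $(i,t]$ is $\Bin(t-i,1/n)$, asymptotically Poisson with parameter $s - s_i$, where $s_i := i/n$.

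For the first moments: since $Z_x \sim \Bin(t,1/n)$ for each $x$, Poisson approximation yields $\Pr[Z_x=0]\sim e^{-s}$ and $\Pr[Z_x=1]\sim s e^{-s}$, so $\E[Z] \sim (2-2e^{-s}-se^{-s})n$. For $\E[|\W_1|]$, I would sum over pairs $(x,i)$ such that $i$ is the unique step at which $x$ receives a square: the probability of this is $\sim (1/n)(1-1/n)^{t-1}$, and conditional on it, the probability that $v_i$ (determined by the strategy from the history) receives $\geq 2$ further squares in $(i,t]$ is $\sim 1 - e^{-(s-s_i)} - (s-s_i)e^{-(s-s_i)}$ by the observation above. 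Summing over $x$ and $i$, and converting the sum over $i$ to a Riemann integral over $s_i \in [0,s]$, yields an explicit function $g_1(s)$ with $\E[|\W_1|]\sim g_1(s)n$. An analogous calculation yields $\E[|\W_2|] \sim g_2(s)n$ (the integrand picks up an extra $(s-s_i)$ factor for the second square on $x$), and a double integral over two step-indices $i_1 < i_2$, tracing the chain $x_1 \to y_1 = x_2 \to y_2$, yields $\E[|\T_j|] \sim h_j(s)n$ for $j=1,2$; this is where the $e^{-3s}$ term in $f(s)$ arises. Substituting and simplifying one checks that $\E[Z] - \E[|\W_1|] - \E[|\W_2|] + \E[W] \sim f(s)n$.

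For concentration, each of these random variables is a sum of indicator variables (indexed by $x$, or by triples $(x_1,y_1,y_2)$), and I would bound the second moment and apply Chebyshev's inequality. For $Z$, $|\W_1|$ and $|\W_2|$ the diagonal contribution is $O(n)$, and indicators indexed by distinct vertices are approximately independent up to an $O(1/n)$ multinomial correction, so the variance is $O(n)$ and concentration at scale $n$ is automatic. For $|\T_1|$ and $|\T_2|$ the situation is more delicate, because indicators indexed by different $x_1$'s can be coupled through the strategy's circle choices: if $\scr{S}$ selects the same vertex as $v_i$ for many different steps $i$, the corresponding indicators become positively correlated. This is precisely where the $\mu$-well-behaved hypothesis is used: it bounds the circle in-degree of every vertex by $\mu = \sqrt{n}$ in the first $t = O(n)$ steps, so the number of pairs of contributing triples that share a circle-vertex is at most $O(n\mu) = O(n^{3/2})$, each such pair contributes $O(1)$ to the variance, and the off-diagonal variance remains $o(n^2)$.

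The main obstacle will be organizing the variance computation for $|\T_1|$ and $|\T_2|$, in particular classifying pairs of contributing triples according to how they share vertices (through a shared square-vertex, through a shared circle-vertex, or through a coincidence between the two) and invoking the well-behaved cap at the right point in each class. The first moments themselves reduce to routine Poisson integrals, and the verification that the five leading-order expressions combine to give precisely the function $f(s)$ defined in Theorem~\ref{thm:main_lower_bound} is a short algebraic simplification.
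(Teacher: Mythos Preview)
Your approach is the same as the paper's: compute first moments of $Z,|\W_1|,|\W_2|,|\T_1|,|\T_2|$ (which are strategy-independent because the squares are i.i.d.\ uniform), convert the step-sums to integrals, and then prove concentration via the second moment. The paper carries out exactly this program in Lemma~\ref{lem:structures} and then checks algebraically that the five expressions combine to $f(s)n$.

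There is one genuine gap. You assert that for $|\W_1|$ and $|\W_2|$ ``indicators indexed by distinct vertices are approximately independent up to an $O(1/n)$ multinomial correction'', so that concentration is automatic without invoking the well-behaved hypothesis. This is false: the indicators $I_{x_1},I_{x_2}$ are coupled not only through the multinomial squares but also through the circle vertices $y_1=v_{i_1},\,y_2=v_{i_2}$, and an adversarial strategy can make that coupling $\Theta(1)$ rather than $O(1/n)$. Concretely, take the strategy that sets $v_i:=1$ in every step. Then every pair in $\W_1$ has $y=1$, and the event ``vertex $1$ receives at least two squares after step $i$'' is governed by a \emph{single} Poisson variable shared across all $x$. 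With probability roughly $e^{-s}(1+s)>0$ vertex $1$ receives at most one square total and $|\W_1|=0$, while on the complement $|\W_1|=\Theta(n)$; hence $\Var(|\W_1|)=\Theta(n^2)$ and concentration fails outright.

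The repair is precisely the mechanism you already describe for $|\T_1|,|\T_2|$, and it is what the paper does for $|\W_1|$ too: split pairs $((x_1,y_1),(x_2,y_2))\in\W_1\times\W_1$ according to whether the four vertices are distinct (this part gives $\sim(\E|\W_1|)^2$) or $y_1=y_2$ (the only other possibility, since $x_j$ has one square and each $y_j$ has at least two). The number of pairs with $y_1=y_2$ is at most $n\mu$ under the $\mu$-well-behaved assumption, since fixing $(x_1,y_1)$ leaves at most $\mu$ choices of $x_2$ with $v_{i_2}=y_1$. So the well-behaved hypothesis is needed from the outset, for all four of $|\W_1|,|\W_2|,|\T_1|,|\T_2|$, not only the last two.
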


\begin{proof}[Proof of Theorem~\ref{thm:main_lower_bound}]
Recall that $\beta$ is the positive root of
$f(s)=1$. Then, for every $\eps>0$, $Z-|\W_1|-|\W_2|+W\le (1-O(\eps)) n$  a.a.s.\ in $G^{{\mathcal S}}_{(\beta-\eps)n}$ for any $\mu$-well-behaved ${\mathcal S}$. Therefore, $\tau_{\texttt{HAM}}\ge \beta$.
\end{proof}

\section{Conclusion and Open Problems}

We have made significant progress on reducing the gap between the previous best upper and lower bounds on $\tau_{{\tt HAM}}$. 
That being said, we do not believe that any of our new bounds are tight. For instance, in the case of our lower bound, 
one could study the appearance of more complicated substructures which prevent any strategy from building
a Hamiltonian cycle. One way to likely improve the upper bound would be to analyze an adaptive algorithm whose
decisions are all made greedily. Rather, in the terminology of \ref{alg:degree_greedy}, when a (second) square lands
on a blue vertex, the edge is greedily chosen amongst unsaturated vertices of minimum blue degree (opposed to u.a.r.). Unfortunately, it seems challenging to analyze this algorithm via the differential equation method, but it is likely that this algorithm will
lead to an improved upper bound on $\tau_{{\tt HAM}}$ of less than $2$.

Another direction is to understand which graph properties exhibit \textbf{sharp thresholds}.
Given property $\scr{P}$, the definition of $\tau_{\scr{P}}$ ensures that
there exists a strategy $\scr{S}^*$ such that for all $\eps > 0$, $G_{t}^{\scr{S}^*}(n)$ satisfies $\scr{P}$ a.a.s. for $t \ge (\tau_{\scr{P}} + \eps)n$.
On the other hand, $G_{t}^{\scr{S^*}}(n)$ may satisfy $\scr{P}$ with \textit{constant} probability for $t \le (\tau_{\scr{P}} - \eps)n$ without
contradicting the definition of $\tau_{\scr{P}}$. Thus, for $\scr{P}$ to have a sharp threshold, the following property must also hold. For each strategy $\scr{S}$ and $\eps > 0$, if $t \le (\tau_{\scr{P}} - \eps)n$, then a.a.s. $G_{t}^{\scr{S}}(n)$ does \textit{not} satisfy $\scr{P}$. 
It is known that for basic properties, such as minimum degree $k \ge 1$, sharp thresholds do exist \cite{beneliezer2019semirandom}.
Moreover, in \cite{beneliezer2020fast} it was shown that if $H$ is a spanning graph with max degree $\Delta = \omega( \log n)$, then the appearance of $H$ takes $(\Delta/2 + o(\Delta))n$ rounds, and $H$ (deterministically) cannot be constructed in fewer than $\Delta n/2$ rounds.
However, in general it remains open as to whether or not a sharp threshold exists when $H$ is
\textit{sparse} (i.e., $\Delta = O(\log n)$).
Very recently, the second author and Surya \cite{macrury_2022}, a developed
a general machinery for proving the existence of sharp thresholds in adaptive random
graph processes. Applied to the semi-random graph process, they show that
sharp thresholds exist for the property of being Hamiltonian as well as to containing a perfect matching. This provides some evidence that sharp thresholds \textit{do} exist when $\Delta = O(\log n)$, and we leave this an interesting open problem.



\bibliographystyle{plain}

\bibliography{refs.bib}

\appendix

\section{Proofs of Lemmas~\ref{lem:randomized_expected_differences} and~\ref{lem:clean_up}}
\label{sec:lemma_proofs}


\begin{proof}[Proof of Lemma~\ref{lem:randomized_expected_differences}]

As discussed, \ref{alg:fully_randomized} ensures that at time $t$ there are at most $2$ red vertices
which are not well-spaced. Thus, since our expected differences each allow for a $O(\log n/n)$ term, without loss of generality, we can
assume that all our red vertices are well-spaced. Note that all our explanations below assume that we have conditioned on $H_t$.

The first expected difference is easy to see.
Observe that $\Delta X(t)$ is $1$ when $u_{t+1}$ lands on an unsaturated
vertex, or adjacent to a red vertex. Clearly, these are disjoint events, and they occur with probabilities $1 -X(t)/n$ and $2 L(t)/n$, respectively. 

In order to derive the second and third expected differences, 
we make use of the following crucial observation:
\begin{enumerate}[label=(\subscript{O}{{\arabic*}})]
    \item Conditional on $H_t$, the circles of the red edges of $\scr{L}_t$ are distributed u.a.r.\ amongst the unsaturated vertices $U_t$. \label{eqn:conditional}
\end{enumerate}
Note that were we to condition on the full history, i.e., $G_0, \ldots ,G_t$, then these circles would be determined by the history of the process, and so the only randomness in the expectations
would be over the draw of $u_{t+1}$. By averaging over this additional randomness, we are able to get the
claimed expected differences.

Consider now the second expected difference and assume that $u_{t+1}$ lands on
an unsaturated vertex. Firstly, observe that this event
occurs with probability $1 - X(t)/n$. On the other hand, all the red edges adjacent to $u_{t+1}$ 
will be uncoloured after the path augmentation involving $u_{t+1}$ is made. Now, because of \ref{eqn:conditional}, there are $\frac{L_{1}(t)}{n - X(t)}$ red edges belonging to $\scr{L}^{1}_t$ which are adjacent
to $u_{t+1}$ in expectation. After the path augmentation involving $u_{t+1}$, these edges are uncoloured and so $\frac{L_{1}(t)}{n - X(t)}$ one-red vertices are destroyed in expectation. Now, in expectation there are also $\frac{2 L_{2}(t)}{n - X(t)} + O(\log n /n)$ red edges adjacent to $u_{t+1}$ which belong to \textit{distinct} two-red vertices. To see this, fix a two-red vertex $x \in \scr{L}_{t}^{2}$ and observe that because of \ref{eqn:conditional}, precisely one red edge of $x$ is adjacent to $u_{t+1}$ with probability $\frac{2}{n-X(t)} - \frac{1}{(n-X(t))^{2}}$. Since  $n - X(t) \ge n/\log n$ by assumption, this probability is  $\frac{2}{n-X(t)} + O( (\log n/n)^2)$, and so the $\frac{2 L_{2}(t)}{n - X(t)} + O( \log n /n )$ term follows after summing over all the vertices of $\scr{L}_{t}^{2}$. Now, after the path augmentation involving $u_{t+1}$, these red edges are uncoloured. Since these red edges belonged to distinct two-red vertices, the path augmentation creates $\frac{2 L_{2}(t)}{n - X(t)} + O(\log n /n)$ new one-red vertices in expectation. These two cases explain the $\left(1 - \frac{X(t)}{n} \right)\left( \frac{2 L_{2}(t)}{n - X(t)} - \frac{L_{1}(t)}{n - X(t)} \right)$ term.


Let us now consider when $u_{t+1}$ lands on a saturated vertex and $d_{P_t}(u_{t+1}, \scr{L}_t) =1$,
where $x$ is the unique red vertex adjacent to $u_{t+1}$. If $x$ is a one-red vertex, then let
$r$ be such that $xr$ is the red edge of $x$. Observe that after the augmentation, $xr$ will be uncoloured,
and $x$ will no longer be a red vertex. Moreover, in expectation there are $\frac{L_{1}(t)}{n-X(t)} + O(\log n /n)$ other red edges belonging to $\scr{L}^{1}_t$ which will be uncoloured. Thus, $1 + \frac{L_{1}(t)}{n-X(t)} + O( \log n/n)$ one-red vertices will be destroyed in expectation. On the other hand,
there are $\frac{2 L_{2}(t)}{n - X(t)} +O(\log n/n)$ red edges adjacent to $r$ which belong to distinct two-red vertices in expectation. Thus, $\frac{2 L_{2}(t)}{n - X(t)} +O(\log n/n)$ two-red vertices will become one-red vertices in expectation after augmenting via $xr$ and $u_{t+1}r$. Since $u_{t+1}$ lands next
to a one-red vertex with probability, $\frac{2 L_{1}(t)}{n}$, this explains the 
$\frac{2 L_{1}(t)}{n}\left( \frac{2 L_{2}(t)}{n - X(t)} - \frac{L_{1}(t)}{n-X(t)} -1 \right)$ term.
An analogous argument explains the $\frac{2 L_{2}(t)}{n}\left( 1 + \frac{2 L_{2}(t)}{n - X(t)} - \frac{L_{1}(t)}{n-X(t)} \right)$ term.

Consider when $u_{t+1}$ lands in $\scr{Q}_t$. Observe that this occurs
with probability 
$\frac{|\scr{Q}_t|}{n} = \frac{X(t) - 5L(t)}{n} $. In this case,
$v_{t+1}$ is chosen u.a.r.\ amongst $U_t$ and $u_{t+1} v_{t+1}$
is coloured red. Thus, $u_{t+1}$ becomes a red vertex,
and so  $L_{1}(t)$ increases by $1$ and we get $\Delta L_{1}(t) =1$. This explains the 
$\frac{X(t) - 5L(t)}{n}$ term.

The final case to consider is when $u_{t+1}$ lands on a saturated vertex,
and $u_{t+1} \in \scr{L}^{1}_t$. Observe
that this occurs with probability $\frac{L_{1}(t)}{n}$. Moreover,
the algorithm will then choose $v_{t+1}$ u.a.r.\ amongst $U_t$ and colour the edge $u_{t+1}v_{t+1}$
red. After this move, $u_{t+1}$ will be converted from a one-red vertex to a two-red vertex, and so
$\Delta L_{1}(t) =1$. This explains the $\frac{-L_{1}(t)}{n}$ term.

By combining the contributions from all of the above cases, we get the second expected difference.
The third expected difference follows via an analogous argument.
\end{proof}

\begin{proof} [Proof of Lemma~\ref{lem:clean_up}]
Let $j_0=\eps n$.
For each $k\ge 1$, 
let $j_k=(1/2) j_{k-1}$ if $j_{k-1}>n^{1/4}$, and let $j_k=j_{k-1}-1$ otherwise. Clearly, $j_k$ is a decreasing function of $k$. Let $\tau_1$ be the smallest natural number $k$ such that $j_k\le n^{1/4}$. Let $\tau$ be the natural number $k$ such that $j_k=0$. Obviously, $\tau_1=O(\log n)$ and $\tau=O(n^{1/4})$.

We use a cleaning-up algorithm, which runs in iterations. The $k$-th iteration repeatedly absorbs $j_{k-1}-j_k$ vertices into $P$, leaving $j_k$ unsaturated vertices in the end. The $k$-th iteration of the cleaning-up algorithm works as follows.

\begin{itemize}
    \item[(i)] ({\em Initialising}): Uncolour all vertices in the graph;
    \item[(ii)] ({\em Building reservoir}): Let $m_k:=\sqrt{\eps}(1/2)^{k/2}n$
     for $k\le \tau_1$ and $m_k:=n^{1/2}$ if $\tau_1<k\le\tau$. 
    Add $m_k$ semi-random edges as follows. If $u_t$ lands on an unsaturated vertex, a red vertex or a neighbour of a red vertex in $P$, then let $v_t$ be chosen arbitrarily. This edge $u_tv_t$ will not be used in our construction. Otherwise, colour $u_t$ red and choose an arbitrary $v_t$ among those unsaturated vertices with the minimum  number of red neighbours. Colour $u_tv_t$ red. Note that each red vertex is adjacent to exactly one red edge; 
    \item[(iii)] ({\em Absorbing via path augmentations}): Add semi-random edges as follows. Suppose that $u_t$ lands on $P$ and at least one neighbour of $u_t$ on $P$ is red. (Otherwise, $v_t$ is chosen arbitrarily, and this edge will not be used in our construction.) Let $x$ be such red vertex (if $u_t$ has two neighbours on $P$ that are red, then select one of them arbitrarily). Let $y$ by the neighbour of $x$ where $xy$ is red, and let $v_t=y$. Extend $P$ by deleting the edge $xu_t$ and adding the edges $xy$ and $yu_t$. Uncolour all red edges incident to $y$ and all red neighbours of $y$ (which, of course, includes vertex $x$). 
\end{itemize}
Notice that, in each iteration, $m_k\ge n^{1/2}$. 
Indeed, this is obviously true for $\tau_1<k\le\tau$. On the other hand, if $k\le \tau_1$, then $j_k = \eps n (1/2)^k$ and so $m_k = \sqrt{n j_k} \ge \sqrt{n}$ (in fact, $m_k = \Omega(n^{5/8})$).

Let $T_k$ denote the length of the $k$-th iteration of the cleaning-up algorithm.
It remains to prove that a.a.s.\ $\sum_{k\le \tau} T_k = O(\sqrt{\eps} n + n^{3/4}\log^2 n)$.
Let $R_k$ be the number of red vertices obtained after step (ii) of iteration $k$. Obviously, $R_k\le m_k$.  On the other hand, each $u_t$ is coloured red with probability at least $1-j_{k-1}/n-3m_k/n \ge 1 - \eps - 3\sqrt{\eps} \ge 0.95$. Hence, $R_k$ can be stochastically lower bounded by the binomial random variable $\Bin(m_k, 0.95)$. By the Chernoff bound, with probability at least $1-n^{-1}$, $R_k \ge 0.9 m_k$, as $m_k\ge n^{1/2}$. 

First, we consider iterations $k\le \tau_1$.
Let $\tilde R_k$ be the number of red vertices at the end of step~(iii). Note that the minimum degree property of step~(ii) ensures each unsaturated vertex is adjacent to at most $R_k/j_{k-1}+1 \le m_k/j_{k-1}+1$ red vertices. Moreover, exactly $j_{k-1}-j_k=(1/2)j_{k-1}$ vertices are absorbed in step (iii). As a result, 
\[
\tilde R_k\ge R_k-\left(\frac{m_k}{j_{k-1}}+1\right) \cdot \frac{j_{k-1}}{2} \ge 0.9 m_k-\frac{m_k}{2}-\frac{j_{k-1}}{2} \ge 0.3m_k,
\]
as $j_{k-1} = 2 j_k \le 2 \sqrt{\eps} m_k \le 0.1 m_k$.
It follows that throughout step (iii), there are at least $0.3 m_k$ red vertices. Thus, for each semi-random edge added to the graph, the probability that a path extension can be performed is at least $0.3m_k/n=0.3\sqrt{\eps}(1/2)^{k/2}$. Again, by the Chernoff bound, with probability at least $1-n^{-1}$, the number of semi-random edges added in step (iii) is at most
\[
2(j_{k-1}-j_k) \cdot \frac{2^{k/2}}{0.3\sqrt{\eps}} \le 7\sqrt{\eps}(1/2)^{k/2} n.
\]
Combining the number of semi-random edges added in step (ii), it follows that with probability at least $1-n^{-1}$,
$T_k\le m_k+7\sqrt{\eps}(1/2)^{k/2} n = 8\sqrt{\eps}(1/2)^{k/2} n$.

Next, consider iterations $\tau_1<k\le \tau$. In each iteration, exactly one unsaturated vertex gets absorbed. The number of semi-random edges added in step (ii) is $m_k=n^{1/2}$. We have argued that with probability at least $1-n^{-1}$, $R_k\ge 0.9m_k$.
Thus, for each semi-random edge added to the graph, the probability that a path extension can be performed is at least $0.9m_k/n=0.9n^{-1/2}$. By the Chernoff bound, with probability at least $1-n^{-1}$, the number of semi-random edges added in step (iii) is at most
$
n^{1/2}\log^2 n.
$ Thus, with probability at least $1-n^{-1}$, $T_k\le n^{1/2}+n^{1/2}\log^2 n \le 2n^{1/2}\log^2 n$.

Taking the union bound over all $k\le\tau$, since $\tau = O(n^{1/4})$, it follows that a.a.s.\ 
\[
\sum_{k\le \tau}T_k \le \sum_{k\le \tau_1} 8\sqrt{\eps}(1/2)^{k/2} n + \sum_{\tau_1<k\le \tau} 2n^{1/2}\log^2 n 
=O(\sqrt{\eps}n + n^{3/4}\log^2 n) \]

We have shown that a.a.s.\ by adding $O(\sqrt{\eps}n + n^{3/4}\log^2 n)$ additional semi-random edges we can construct a Hamiltonian path $P$. To complete the job and turn it into a Hamiltonian cycle, let $u$ and $v$ denote the left and, respectively, the right endpoint of $P$. First, add $n^{1/2}$ semi-random edges $u_tv_t$ where $v_t$ is always $u$, discarding any multiple edges that could possibly be created. For each such semi-random edge $u_tu$, colour the left neighbour of $u_t$ on $P$ blue. Next, add add $n^{1/2}\log^2 n$ semi-random edges $u_tv_t$ where $v_t$ is always $v$. Suppose that some $u_t=x$ is blue. Then, a Hamiltonian cycle is obtained by deleting $xy$ from $P$ and adding the edges $xv$ and $uy$, where $y$ is the right neighbour of $x$ on $P$. By Chernoff bound, a.a.s.\ a semi-random edge added during the second round hits a blue vertex, completing the proof.
\end{proof}

\section{Proof of Lemma~\ref{lem:inductive}}
\label{sec:inductive}

We once again must first argue that our random variables cannot change drastically in one round during phase $q$.
\begin{lemma}[Lipschitz Condition -- \ref{alg:degree_greedy}] \label{lem:boundedness_greedy}
If $|\Delta C(t)| := \max_{\substack{k_1, k_2 \in \Nn \cup \{0\}: \\ k_1 + k_2 \in \{q-1,q\}}} |\Delta C_{k_1,k_2}(t)|$, then with probability $1- O(n^{-1})$,
$$\max\{ |\Delta X(t)|, |\Delta C(t)|, |\Delta R(t)|\} = O(\log n)$$
for all $\tau_{q-1} \le t < \tau_{q}$ with $n - X(t) = \Omega(n)$. 
\end{lemma}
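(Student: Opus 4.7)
The plan is to follow the structure of Lemma~\ref{lem:lipschitz_randomized}. First, note that $|\Delta X(t)| \le 1$ holds deterministically, since each step of \ref{alg:degree_greedy} absorbs at most one vertex into the path (via either a greedy extension or a path augmentation). The nontrivial part is to control $|\Delta C_{k_1,k_2}(t)|$ and $|\Delta R(t)|$, and the strategy is to bound these by the coloured degree at the vertex that is absorbed or whose colour flips.

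Next, I would split according to the algorithm's branches. In the non-extension cases (lines that create blue or magenta vertices), the only colour changes occur at $u_{t+1}$ and at its single newly-chosen partner $v_{t+1}\in U_t$, so an inspection shows that the types of only $O(1)$ unsaturated vertices are altered and $R(t)$ changes by $O(1)$. In the path extension and path augmentation cases, the entire coloured neighbourhood of the absorbed vertex (either $u_{t+1}$ or $y$) is uncoloured. The key observation is that each such uncoloured edge affects at most one additional unsaturated vertex's type: if $u_{t+1}x$ is uncoloured with $x\in \scr{L}_t$, then $x$ carries at most one further coloured edge (blue/magenta path vertices have a unique blue edge and red/magenta path vertices have a unique red edge by construction), and this remaining edge goes to a unique $w\in U_t$ whose type and whose contribution to $R(t)$ may then shift by one. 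Consequently, $|\Delta C_{k_1,k_2}(t)|$ and $|\Delta R(t)|$ are each bounded, up to a constant, by the coloured degree of the absorbed vertex.

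It therefore remains to bound this coloured degree. Its blue component is at most $q\le N=O(1)$, since every unsaturated vertex has blue degree in $\{q-1,q\}$ throughout phase $q$. Its red component is bounded by a concentration argument analogous to the one in Lemma~\ref{lem:lipschitz_randomized}: red edges are created only in the ``blue vertex'' branch, in which the circle $v_{t+1}$ is drawn u.a.r.\ from $U_t$, and the hypothesis $n-X(t)=\Omega(n)$ implies $|U_t|=\Omega(n)$. Hence for any fixed $v\in[n]$ and any $t\le 3n$, the number of red edges incident to $v$ is stochastically dominated by $\Bin(3n, O(1/n))$, which has expectation $O(1)$ and, by Chernoff's inequality, is $O(\log n)$ with probability $1-O(n^{-3})$. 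Union-bounding over all $n$ vertices and at most $3n$ time steps yields the $O(\log n)$ bound on both $|\Delta C_{k_1,k_2}(t)|$ and $|\Delta R(t)|$ with failure probability $O(n^{-1})$.

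The main obstacle I anticipate is making the cascade analysis watertight: namely, verifying that the uncolouring of each edge incident to the absorbed vertex can only propagate to a bounded number of additional unsaturated-vertex type changes. The uniqueness of blue (resp.\ red) edges at blue/magenta (resp.\ red/magenta) path vertices is what prevents a combinatorial blow-up and reduces the question to bounding the coloured degree at a single vertex, which the concentration argument handles cleanly.
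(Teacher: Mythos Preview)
Your proposal is correct and follows essentially the same approach as the paper, which simply says: since $q\le N$ is a constant, apply the argument of Lemma~\ref{lem:lipschitz_randomized} to bound the red degree of the absorbed vertex, then union-bound over the $O(1)$ many pairs $(k_1,k_2)$ with $k_1+k_2\in\{q-1,q\}$. Your write-up is in fact more detailed than the paper's in that you explicitly trace the cascade (each uncoloured edge at the absorbed vertex affects at most one further unsaturated vertex's type or one unit of $R(t)$, because blue and magenta path vertices carry a unique blue edge), which the paper leaves implicit.
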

\begin{proof}
Since $q \le N$ is a constant which does not depend on $n$, we can apply the same argument to bound
the red edges of each $\Delta C_{k_1,k_2}(t)$ as in Lemma \ref{lem:lipschitz_randomized}, and then union bound over all $k_1,k_2 \ge 0$ such that $k_1 + k_2 \in \{q-1,q\}$.
\end{proof}

Let $H_t$ denote the history of the above random variables during the first $t$ rounds.
We now state the conditional expected differences of our random variables,
where we assume that $\tau_{q-1} \le t < \tau_{q}$ is such that $n - X(t) = \Omega(n)$.
Firstly, observe that once again:
\begin{eqnarray}\label{eqn:degree_x_change}
\E[\Delta X(t)\mid H_t]&=&  1 - \frac{X(t)}{n} + \frac{2 L(t)}{n} +O(1/n)
\end{eqnarray}
We now consider $\Delta R(t)$:
\begin{eqnarray} \label{eqn:degree_r_change}
\E[\Delta R(t) \mid H_t] &=& \frac{M(t)}{n}  - \frac{R(t)}{n} -  \frac{2(B(t) + M(t))}{n} \frac{R(t)}{(n-X(t))}  \nonumber \\
&&  + \sum_{\substack{j,h: \\ j+ h \in \{q-1,q\}}}  \frac{2 h ( M_{j,h}(t) +  B_{j,h}(t) ) }{n} \nonumber \\
&&- \frac{2 R(t)}{n} \left( 1 + \frac{R(t)}{n-X(t)}  - \frac{M(t)}{n- X(t)} \right) - \frac{R(t)}{n} +O(1/n) 
\end{eqnarray}
Consider $\Delta C_{k_1,k_2}(t)$ and first assume that $k_1 + k_2 = q-1$:
\begin{eqnarray} \label{eqn:degree_c_min_change}
\E[ \Delta C_{k_1,k_2}(t) \mid H_t] &=& 
\frac{M_{k_1 -1,k_2 +1}(t)}{n} \cdot \bm{1}_{k_1 >0} -\frac{C_{k_1,k_2}(t)}{n} - \frac{M_{k_1,k_2}(t)}{n} \nonumber \\
&&+\frac{2 (B(t) + M(t))}{n} \left( \frac{M_{k_1 -1, k_2 +1}(t)}{n - X(t)} \cdot \bm{1}_{k_1 > 0} - \frac{M_{k_1,k_2}(t)}{n - X(t)} \right) - \frac{2 (B_{k_1,k_2}(t) + M_{k_1,k_2}(t))}{n} \nonumber \\
&& +\frac{2 R(t)}{n } \left( \frac{M_{k_1 -1,k_2 +1}(t)}{n - X(t)} \cdot \bm{1}_{k_1 >0} - \frac{M_{k_1,k_2}(t)}{n - X(t)} - \frac{C_{k_1,k_2}(t)}{n - X(t)} \right) \nonumber \\
&&-\frac{(X(t)- 5 L(t))}{n} \frac{C_{k_1,k_2}(t)}{D(t)}  \nonumber \\
&&+ \frac{B_{k_1 +1,k_2 -1}(t)}{n} \cdot \bm{1}_{k_2 > 0} -\frac{R(t)}{n} \frac{C_{k_1,k_2}(t)}{D(t)} -\frac{B_{k_1,k_2}(t)}{n} + O(1/n)
\end{eqnarray}

When $k_1 + k_2 =q$, two terms from the above expression are modified slightly, and have their signs
reversed:
\begin{eqnarray} \label{eqn:degree_c_max_change}
\E[ \Delta C_{k_1,k_2}(t) \mid H_t] &=& 
\frac{M_{k_1 -1,k_2 +1}(t)}{n} \cdot \bm{1}_{k_1 >0} -\frac{C_{k_1,k_2}(t)}{n} - \frac{M_{k_1,k_2}(t)}{n} \nonumber \\
&&+\frac{2 (B(t) + M(t))}{n} \left( \frac{M_{k_1 -1, k_2 +1}(t)}{n - X(t)} \cdot \bm{1}_{k_1 > 0} - \frac{M_{k_1,k_2}(t)}{n - X(t)} \right) - \frac{2 (B_{k_1,k_2}(t) + M_{k_1,k_2}(t))}{n} \nonumber \\
&& +\frac{2 R(t)}{n } \left( \frac{M_{k_1 -1,k_2 +1}(t)}{n - X(t)} \cdot \bm{1}_{k_1 >0} - \frac{M_{k_1,k_2}(t)}{n - X(t)} - \frac{C_{k_1,k_2}(t)}{n - X(t)} \right)   \nonumber \\
&&+\frac{(X(t)- 5 L(t))}{n} \frac{C_{k_1-1,k_2}(t)}{D(t)}  \nonumber  \\
&&+ \frac{B_{k_1 +1,k_2 -1}(t)}{n} \cdot \bm{1}_{k_2 > 0} +\frac{R(t)}{n} \frac{C_{k_1,k_2-1}(t)}{D(t)} -\frac{B_{k_1,k_2}(t)}{n} + O(1/n)
\end{eqnarray}



\subsection{Proving the Expected Differences}
Consider step $t$ of phase $q$ which we assume satisfies $n- X(t) = \Omega(n)$.
Similarly to \ref{alg:fully_randomized},~\ref{alg:degree_greedy} ensures that all but at most
$2$ of its coloured vertices are well-spaced. Thus, we may assume without loss of generality that all of our coloured vertices are well-spaced. Since every well-spaced coloured vertex has its own two neighbouring vertices where a path augmentation may occur, it follows that
$\E[\Delta X(t)\mid H_t]= 1 - \frac{X(t)}{n} + \frac{2 L(t)}{n} +O(1/n)$.

In order to prove the remaining expected differences, we analyze the expected values of the random
variables $\Delta R(t)$ and $\Delta C_{k_1,k_2}(t)$ when $u_{t+1}$ lands in subset $A \subseteq [n]$
for a number of choices of $A$. More precisely, we derive tables for $\mb{E}[ \Delta R(t) \cdot \bm{1}_{u_{t+1} \in A} \mid H_t]$ 
and $\mb{E}[ \Delta C_{k_1,k_2}(t) \cdot \bm{1}_{u_{t+1} \in A} \mid H_t]$
when $A \subseteq [n]$ varies across a number of subsets. Since these
are disjoint subsets of $[n]$, and the random variables are $0$ if $u_{t+1}$ lands outside
of these subsets, we can sum the second column entries to get the claimed expected differences.
Note that the entries of our tables do \textit{not} contain the often necessary
$O(1/n)$ term. 

In the below derivations, we once again make use of the following crucial observation:
\begin{enumerate}[label=(\subscript{O}{{\arabic*}})]
    \item Conditional on $H_t$, the circles of the red edges of $\scr{L}_t$ are distributed u.a.r.\ amongst the unsaturated vertices $U_t$. \label{eqn:conditional_degree_greedy}
\end{enumerate}
In all our below explanations, we assume that we have conditioned on $H_t$. We also abuse
notation and simultaneously identify our random variables as sets (i.e., $C_{k_1,k_2}(t)$ denotes the set of
unsaturated vertices of type $(k_1,k_2)$ after $t$ steps).
\begin{table}[H]
\caption{Expected Changes to $\Delta R(t)$}\label{table:red_changes}
\begin{tabular}{ |c|c|}
 \hline
 $A \subseteq [n]$  & $\mb{E}[ \Delta R(t) \cdot \bm{1}_{u_{t+1} \in A} \mid H_t]$\\
 \hline
 $U_t$ & $\frac{M(t)}{n} -\frac{R(t)}{n}$ \\
 Path distance $1$ from $\scr{B}_t \cup \scr{M}_t$  & $-\frac{2 (B(t) + M(t) )}{n} \frac{R(t)}{n - X(t)} + \sum_{\substack{j,h: \\ j+ h \in \{q-1,q\}}} \frac{2( B_{j,h}(t)  + M_{j,h}(t)) \cdot h}{n} $  \\ 
 Path distance $1$ from $\scr{R}_t$  & $-\frac{2 R(t)}{n} \left(1 + \frac{R(t)}{n - X(t)} \right) + \frac{2 R(t)}{n} \frac{M(t)}{n-X(t)}$\\
  $\scr{R}_t$   & $\frac{-R(t)}{n}$ \\
 \hline
\end{tabular}
\end{table}
\begin{proof}[Proof of Table \ref{table:red_changes}]
We provide complete proofs only of row entries $1$ and $3$, as $2$ follows similarly to
$3$ and $4$ follows similarly to $1$.

In order to see the first row entry, observe that there are $\frac{R(t)}{n}$ red edges between $\scr{R}_t$
and $U_t$. Moreover, if $u_{t+1}$ lands on an unsaturated vertex, then a path extension is made, and all the
edges adjacent to $u_{t+1}$ are uncoloured. Thus, in expectation $\frac{R(t)}{n}$ red vertices are destroyed
when $u_{t+1}$ lands in $U_t$. We also claim that $\frac{M(t)}{n}$ red vertices are created in expectation
when $u_{t+1}$ lands in $U_t$. In order to see this, suppose that $x$ is a magenta vertex, with blue edge $xb$ and red edge $xr$,
where $b, r \in U_t$. Now, if $r \neq b$ and  $u_{t+1}$ lands on $b$, then the edge $xb$ is uncoloured, and
$x$ becomes a red vertex. On the other hand, $r$ is distributed u.a.r. amongst $U_t$ (see \ref{eqn:conditional_degree_greedy}), and in particular, is distinct
from $b$ with probability $1 - \frac{1}{n-X(t)}$. Thus, $x$ is converted to a red vertex with probability $\frac{1}{n} \left(1 - \frac{1}{n-X(t)} \right) = 1/n + O(1/n^2)$, since $n - X(t) = \Omega(n)$.
By summing over all the magenta vertices, we recover a $M(t)/n + O(1/n)$ term which implies the $M(t)/n - R(t)/n$ row entry.

Let us now consider what happens when $u_{t+1}$ lands on $P_t$ next to a red vertex. Clearly, this event occurs with probability $2 R(t)/n$, as we have assumed that all the red vertices are well-spaced. Let us suppose that $u_{t+1}$ lands on red vertex $x$, and its red edge is $xr$ for some $r \in U_t$. In this case, a path augmentation involving $xr$ occurs and all the adjacent edges of $r$ are uncoloured after $r$ is added to the path. By \ref{eqn:conditional_degree_greedy}, there are $1 + \frac{R(t) -1}{n-X(t)}$ red edges adjacent to $r$ in expectation. This accounts for the $-\frac{2 R(t)}{n} \left(1 + \frac{R(t)}{n - X(t)} \right)$ term. In order to see the second term of this row entry, first observe that 
\begin{equation}\label{eqn:magenta_vertex_identity}
M(t) =\sum_{\substack{j,h: \\ j+ h \in \{q-1,q\}}}  h \cdot C_{j,h}(t).
\end{equation}
Now, fix $j, h \ge 0$ such that $j + h \in \{q-1,q\}$, and consider $c \in C_{j,h}(t)$. Observe that by definition, $c$ is adjacent to precisely $h$ magenta vertices of $M_{j,h}(t)$ via blue edges. 
We claim that all $h$ of these magenta vertices will be reclassified as red vertices provided
the following events occur:
\begin{itemize}
    \item The circle $v_{t+1}$ lands on $c$.
    \item All the red edges of these $h$ magenta vertices are \textit{not} adjacent to $c$.
\end{itemize}
By \ref{eqn:conditional_degree_greedy}, the first event occurs with probability $1/(n-X(t))$. 
and the latter with probability $\left(1 - \frac{1}{n - X(t)} \right)^h$. Thus, since $h$ is a constant, each $c  \in C_{j,h}(t)$
contributes 
\[ \frac{h}{n-X(t)} \left(1 - \frac{1}{n - X(t)} \right)^h = \frac{h}{n-X(t)} + O(1/n^2) \]
red vertices in expectation.
By summing over $c \in C_{j,h}(t)$, this accounts for the $\frac{h \cdot C_{j,h}(t)}{n-X(t)}$ term.
Finally, we sum over $j,h$ and apply \eqref{eqn:magenta_vertex_identity} to get the $\frac{M(t)}{n - X(t)}$ term.

 
\end{proof}


Consider now $\Delta C_{k_1,k_2}(t)$, where $k_{1} + k_{2} = q-1$. 
\begin{table}[H]
\caption{Expected Changes to $\Delta C_{k_1,k_2}(t)$ for $k_{1} + k_{2} = q-1$} \label{table:type_changes_min}
\begin{tabular}{ |c|c|} 
 \hline
 $A \subseteq [n]$  & $\mb{E}[ \Delta C_{k_1,k_2}(t) \cdot \bm{1}_{u_{t+1} \in A} \mid H_t]$ \\
 \hline
 $U_t$ &    $ \frac{M_{k_1 -1,k_2 +1}(t)}{n} \cdot \bm{1}_{k_1 >0} -\frac{C_{k_1,k_2}(t)}{n} - \frac{M_{k_1,k_2}(t)}{n}$  \\
 Path distance $1$ from $\scr{B}_t \cup \scr{M}_t$  &  $\frac{2 (B(t) + M(t))}{n} \left( \frac{M_{k_1 -1,k_2 +1}(t)}{n - X(t)} \cdot \bm{1}_{k_1 > 0} - \frac{M_{k_1,k_2}(t)}{n - X(t)} \right) - \frac{2 (B_{k_1,k_2}(t) + M_{k_1,k_2}(t))}{n}    $ \\ 
 Path distance $1$ from $\scr{R}_t$  &  $\frac{2 R(t)}{n } \left( \frac{M_{k_1 -1,k_2 +1}(t)}{n - X(t)} \cdot \bm{1}_{k_1 >0} - \frac{M_{k_1,k_2}(t)}{n - X(t)} - \frac{C_{k_1,k_2}(t)}{n - X(t)} \right)$ \\
$\scr{Q}_t$ &  $-\frac{(X(t)- 5 L(t))}{n} \frac{C_{k_1,k_2}(t)}{D(t)}$ \\
  $\scr{R}_t$   &  $-\frac{R(t)}{n} \frac{C_{k_1,k_2}(t)}{D(t)}$ \\
   $\scr{B}_t$   &   $-\frac{B_{k_1,k_2}(t)}{n} + \frac{B_{k_1 +1,k_2 -1}(t)}{n} \cdot \bm{1}_{k_2 > 0}$       \\
 \hline
\end{tabular}
\end{table}
\begin{proof}[Proof of Table \ref{table:type_changes_min}]
Assume that $k_1, k_2$ are both non-zero, as this is the most involved case. We provide complete 
proofs of row entries $1,3$ and $4$. Row entry $2$ follows similarly to row entry $3$, and row entry $5$ follows
similarly to row entry $4$. Row entry $5$ has a simple proof, so we omit it.

We begin with row entry $1$ when $u_{t+1}$ lands on an unsaturated vertex.
First consider the $-\left(\frac{C_{k_1,k_2}(t)}{n} + \frac{M_{k_1,k_2}(t)}{n}\right)$ term. Fix a unsaturated vertex $c$ of type $(k_1,k_2)$, and suppose that $x_1, \ldots , x_{k_2}$ are its magenta neighbours of type $(k_1,k_2 )$. By definition, $c x_i$ is coloured blue, and each $x_i$ also has a red edge $r_i x_i$ for $i=1,\ldots, k_2$. We claim that $c$ is destroyed
with probability $(k_2 + 1)/n + O(1/n^2)$. In order to see this, first observe that because of \ref{eqn:conditional_degree_greedy},
the vertices $c, r_1, \ldots ,r_{k_2}$ are distinct with probability
\[
    \prod_{i=1}^{k_2}\left(1 - \frac{i}{n-X(t)}\right) = 1 + O(1/n).
\]
Let us now condition on these vertices being distinct. Observe
that if $u_{t+1}$ lands on $c$, then $u_{t+1}$ is added to the path, and so $c$ is destroyed.
On the other hand, if $u_{t+1}$ lands on $r_i$, then $r_i$ is added to the path, and so the edge $r_i x_i$ is no longer red.
In particular, $x_i$ is converted to a blue vertex, and so the type of $c$ is reclassified as $(k_1 +1, k_2 -1)$.
In either case, $c$ is destroyed. Thus, conditional on the vertices $b, r_{1}, \ldots ,r_{k_2}$ being distinct,
$c$ is destroyed with probability $(k_2 + 1)/n$. As such, $c$ is destroyed with the claimed probability of $(k_2 + 1)/n + O(1/n^2)$.
By summing over all $c \in C_{k_1,k_2}(t)$ and using the fact that $k_2 \cdot C_{k_1,k_2}(t) = M_{k_1,k_2}(t)$, we attain the $-\left(\frac{C_{k_1,k_2}(t)}{n} + \frac{M_{k_1,k_2}(t)}{n}\right)$ term.
The  $\frac{M_{k_1 -1,k_2 +1}(t)}{n}$ term follows similarly, where reclassifying unsaturated vertices of type $(k_1 -1, k_2 +1)$ to type $(k_1, k_2)$ causes $C_{k_1,k_2}(t)$ to increase.


We now consider the third row entry when $u_{t+1}$ lands at path distance one from $\scr{R}_t$.
First observe that this event occurs with probability $2 R(t)/n$. At this point, using the previous argument and 
\ref{eqn:conditional_degree_greedy}, the path augmentation involving $v_{t+1}$ causes each $c \in C_{k_1,k_2}(t)$
to be destroyed with probability $(k_2 + 1)/(n - X(t)) + O(1/n^2)$. Moreover, each $c' \in C_{k_1 -1,k_2 +1}(t)$
is reclassified as type $(k_1,k_2)$ with probability $(k_2 + 1)/(n - X(t)) + O(1/n^2)$. By summing over
$c \in C_{k_1,k_2}(t)$ and $c' \in C_{k_1 -1,k_2 +1}(t)$, we attain the $\frac{M_{k_1 -1,k_2 +1}(t)}{n - X(t)} - \frac{M_{k_1,k_2}(t)}{n - X(t)} - \frac{C_{k_1,k_2}(t)}{n - X(t)}$ expression. After multiplying
by $2 R(t)/n$, we attain row entry $3$.

Let us now consider row entry $4$ when $u_{t+1}$ lands on a permissible vertex $x \in \scr{Q}_t$.
Clearly, this event occurs with probability $|\scr{Q}_t|/n = (X(t) - 5L(t))/n$. In this case, the algorithm chooses $v_{t+1}$ u.a.r. amongst $D(t)$,
the unsaturated vertices of minimum degree $q-1$, and colours the edge $x v_{t+1}$ blue. Thus, if we fix $c \in C_{k_1,k_2}(t)$, then $c$ will be chosen
with probability $1/D(t)$ since $k_1 + k_2 = q-1$. In this case, $c$ gains a blue edge connected to a blue vertex,
and thus will be reclassified as type $(k_1 + 1, k_2)$. Thus, each $c \in C_{k_1,k_2}(t)$ will be reclassified with probability 
\[
\frac{X(t) - 5L(t)}{n} \frac{1}{D(t)}.
\]
By summing over all $c \in C_{k_1,k_2}(t)$, we get the $-\frac{(X(t)- 5 L(t))}{n} \frac{C_{k_1,k_2}(t)}{D(t)}$ term.
\end{proof}
Finally, when $k_{1} + k_{2} = q$, the expressions in rows $(4)$ and $(5)$ are modified slightly.
\begin{table}[H]
\caption{Expected Changes to $\Delta C_{k_1,k_2}(t)$ for $k_{1} + k_{2} = q$} \label{table:type_changes_max}
\begin{tabular}{ |c|c|} 
 \hline
 $A \subseteq [n]$  & $\mb{E}[ \Delta C_{k_1,k_2}(t) \cdot \bm{1}_{u_{t+1} \in A} \mid H_t]$ \\
 \hline
 $U_t$ &    $ \frac{M_{k_1 -1,k_2 +1}(t)}{n} \cdot \bm{1}_{k_1 >0} -\frac{C_{k_1,k_2}(t)}{n} - \frac{M_{k_1,k_2}(t)}{n}$  \\
 Path distance $1$ from $\scr{B}_t \cup \scr{M}_t$  &  $\frac{2 (B(t) + M(t))}{n} \left( \frac{M_{k_1 -1,k_2 +1}(t)}{n - X(t)} \cdot \bm{1}_{k_1 > 0} - \frac{M_{k_1,k_2}(t)}{n - X(t)} \right) - \frac{2 (B_{k_1,k_2}(t) + M_{k_1,k_2}(t))}{n}    $ \\ 
 
 Path distance $1$ from $\scr{R}_t$  &  $\frac{2 R(t)}{n } \left( \frac{M_{k_1 -1,k_2 +1}(t)}{n - X(t)} \cdot \bm{1}_{k_1 >0} - \frac{M_{k_1,k_2}(t)}{n - X(t)} - \frac{C_{k_1,k_2}(t)}{n - X(t)} \right)$ \\
 $\scr{Q}_t$ &  $\frac{(X(t)- 5 L(t))}{n} \frac{C_{k_1 -1,k_2}(t)}{D(t)}$ \\
  $\scr{R}_t$   &  $\frac{R(t)}{n} \frac{C_{k_1,k_2-1}(t)}{D(t)}$ \\
   $\scr{B}_t$   &   $-\frac{B_{k_1,k_2}(t)}{n} + \frac{B_{k_1 +1,k_2 -1}(t)}{n} \cdot \bm{1}_{k_2 > 0}$       \\
 \hline
\end{tabular}
\end{table}
\begin{proof}[Proof of Table \ref{table:type_changes_max}]
The explanations for the case of $k_1 + k_2 = q$ are identical to those of $k_1 + k_2 = q-1$, except
that vertices of type $(k_1,k_2)$ are created (instead of destroyed) when $u_{t+1}$ satisfies
$u_{t+1} \in \scr{Q}_t$ or $u_{t+1} \in \scr{R}_t$.

\end{proof}

\subsection{Proving Lemma \ref{lem:inductive}} \label{sec:inductive_functions}
In this section, we inductively prove Lemma~\ref{lem:inductive}. Firstly, when $q=0$, by definition $\tau_{0} =0$, and so $\sigma_{0} := 0$ trivially satisfies the conditions of Lemma \ref{lem:inductive}. Let us now assume that $q\ge 1$ and for each of $0\le i\le q-1$ we have defined $\sigma_i$ and functions $x,r$ and $c_{j,h}$ on $[0, \sigma_{i}]$ for each $j,h \ge 0$ with  $j + h = i$, and Lemma~\ref{lem:inductive} holds for all $0\le i\le q-1$. We shall define $\sigma_q$ which satisfies  $\sigma_{q} > \sigma_{q-1}$, extend each $x, r$ and $c_{j,h}$ to $[0, \sigma_q]$, and define new functions $c_{k_1,k_2}$ on $[0,\sigma_q]$ for $k_1 + k_2 =q$. We shall then prove that these functions satisfy the assertion of Lemma~\ref{lem:inductive} with respect to $\tau_q$ and $\sigma_q$, which will complete the proof of the lemma.

Fix a sufficiently small constant $\eps > 0$, and define the bounded domain 
\[
\scr{D}_{\eps}:= \left\{(s,x,r, (c_{j,h})_{j + h \in \{q-1,q\}}): \sigma_{q-1} - 1 <s < 3, |x| < 1 - \eps, |r| <2, |c_{j,h}| <2, \eps < \sum_{j,h: \, j+ h =q-1} c_{j,h} < 2\right\}.
\]
It will be convenient to define auxiliary functions to simplify our equations below.
Specifically, set $b_{k_1,k_2} = k_1 \cdot c_{k_1,k_2}$ and $m_{k_1,k_2} := k_2 \cdot c_{k_1,k_2}$,
as well as $b = \sum_{\substack{j,h: \\ j+ h \in \{q-1,q\} }} b_{j,h}$ and $m = \sum_{\substack{j,h: \\ j+ h \in \{q-1,q\}}} m_{j,h}$.
Finally, set $d = \sum_{\substack{j,h: \\ j+ h =q-1}} c_{j,h}$.
Observe the following system of differential equations:
\begin{eqnarray} \label{eqn:greedy_x_and_r}
x' &=& 1 - x + 2 \\
r' &=& m  - r -  \frac{2(b + m)r}{1-x} + \sum_{\substack{j,h: \\ j+ h \in \{q-1,q\}}} 2 h (m_{j,h} +  b_{j,h}) \nonumber \\
    &&  - 2 r \left( 1 + \frac{r}{1-x}  - \frac{m}{1-x} \right) - r   
\end{eqnarray}
If $k_1 + k_2 = q-1$, then:
\begin{eqnarray}\label{eqn:greedy_de_min_type}
c_{k_1,k_2}' &=& m_{k_1 -1,k_2 +1} \cdot \bm{1}_{k_1 >0} - c_{k_1,k_2} - m_{k_1,k_2} \nonumber \\ 
&&+ 2 (b + m ) \left( \frac{m_{k_1 -1, k_2 +1}\cdot \bm{1}_{k_1 > 0} - m_{k_1,k_2}}{1 - x} \right) - 2 (b_{k_1,k_2} + m_{k_1,k_2} )\nonumber \\
&& + 2r \left( \frac{m_{k_1 -1,k_2 +1}\cdot \bm{1}_{k_1 >0} - m_{k_1,k_2}  - c_{k_1,k_2}}{1-x} \right)   \nonumber  \\
&&- (x- 5 \ell) \frac{c_{k_1,k_2}}{d}  + b_{k_1 +1,k_2 -1} \cdot \bm{1}_{k_2 > 0} - r \frac{c_{k_1,k_2}}{d} - b_{k_1,k_2}  
\end{eqnarray}
Otherwise, if $k_1 + k_2 =q$, then:
\begin{eqnarray}\label{eqn:greedy_de_max_type}
c_{k_1,k_2}' &=& m_{k_1 -1,k_2 +1} \cdot \bm{1}_{k_1 >0} - c_{k_1,k_2} - m_{k_1,k_2} \nonumber \\
&&+ 2 (b + m ) \left( \frac{m_{k_1 -1, k_2 +1}\cdot \bm{1}_{k_1 > 0} - m_{k_1,k_2}}{1 - x} \right) - 2 (b_{k_1,k_2} + m_{k_1,k_2}  )\nonumber \\
&& + 2r \left( \frac{m_{k_1 -1,k_2 +1}\cdot \bm{1}_{k_1 >0} - m_{k_1,k_2}  - c_{k_1,k_2}}{1-x} \right)   \nonumber  \\
&& + (x- 5 \ell) \frac{c_{k_{1} -1,k_2}}{d}  + b_{k_1 +1,k_2 -1} \cdot \bm{1}_{k_2 > 0} + r \frac{c_{k_1,k_2 -1}}{d} - b_{k_1,k_2}
\end{eqnarray}
The right-hand side (r.h.s.) of each of the above equations is Lipchitz on the domain $\scr{D}_{\eps}$, as $d$ is bounded
below by $\eps$. Define
\[
T_{\scr{D}_{\eps}}:=\min\{t \ge 0: (t/n, X(t)/n, R(t)/n, (C_{k_1,k_2}(t)/n)_{k_1 + k_2 \in \{q,q-1\}}) \notin \scr{D}_{\eps}\}
\]
By the inductive assumption, the `Initial Condition' of Theorem \ref{thm:differential_equation_method} is satisfied for
some $\lambda = o(1)$ and values $\sigma_{q-1}, x(\sigma_{q-1}), r(\sigma_{q-1})$ and $c_{j,h}(\sigma_{q-1})$,
where $c_{j,h}(\sigma_{q-1}):=0$ for $j +h = q$. Moreover, the `Trend Hypothesis'
is satisfied with $\delta = O(1/n)$, by the expected differences of \eqref{eqn:degree_x_change}-\eqref{eqn:degree_c_max_change}.
Finally, the `Boundedness Hypothesis' is satisfied with $\beta=O(\log n)$ and $\gamma=O(n^{-1})$
by Lemma~\ref{lem:boundedness_greedy}. Thus, by Theorem~\ref{thm:differential_equation_method}, for every $\delta>0$, a.a.s.\ $X(t)=nx(t/n)+o(n)$, $R(t)=n r(t/n)+o(n)$ and $C_{k_1,k_2}(t)=n c_{k_1,k_2}(t/n)+o(n)$ uniformly for all $\sigma_{q-1} n \le t \le (\sigma(\eps)-\delta) n$, where $x$, $\ell_1$ and $c_{k_1,k_2}$ are the unique solution to \eqref{eqn:greedy_x_and_r}-\eqref{eqn:greedy_de_max_type} with the above initial conditions, and $\sigma(\eps)$ is the supremum of $s$ to which the solution can be extended before reaching the boundary of $\scr{D}_{\eps}$. This immediately yields the following lemma.

\begin{lemma}[Concentration of \ref{alg:degree_greedy}'s Random Variables] \label{lem:concentration_random}
For every $\delta > 0$, a.a.s. for all  $\tau_{q-1} \le t \le (\sigma(\eps)-\delta) n$
and $k_1, k_2 \ge 0$ such that $k_1 + k_2 \in \{q,q-1\}$,
\[
    \max\{|X(t) -x(t/n) n|,|R(t) - r(t/n) n|, |C_{k_1,k_2}(t) - c_{k_1,k_2}(t/n) n||\} =  o(n). 
\]
\end{lemma}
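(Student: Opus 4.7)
The plan is to apply the differential equation method theorem (Theorem~\ref{thm:differential_equation_method}) to the vector of random variables
\[
\mathbf{Y}(t) = \bigl(X(t),\, R(t),\, (C_{k_1,k_2}(t))_{k_1+k_2 \in \{q-1,q\}}\bigr)
\]
tracked from step $\tau_{q-1}$ onward, matched against the ODE system \eqref{eqn:greedy_x_and_r}--\eqref{eqn:greedy_de_max_type} on the bounded domain $\mathcal{D}_\eps$. Since the theorem is essentially a packaged black box, the whole task reduces to verifying its four hypotheses: a Lipschitz condition on the right-hand sides, an initial condition, a trend hypothesis, and a boundedness hypothesis.

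First I would check the Lipschitz condition on $\mathcal{D}_\eps$. The only denominators that appear in \eqref{eqn:greedy_x_and_r}--\eqref{eqn:greedy_de_max_type} are $1-x$ and $d=\sum_{j+h=q-1} c_{j,h}$; by the definition of $\mathcal{D}_\eps$ both are bounded below by $\eps>0$, so the right-hand sides are rational functions with uniformly bounded denominators, and hence Lipschitz on $\mathcal{D}_\eps$. This also guarantees existence and uniqueness of the ODE solution with the initial data specified below, up to the supremum $\sigma(\eps)$ at which that solution first meets $\partial \mathcal{D}_\eps$.

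Next I would verify the three probabilistic hypotheses in turn. The \emph{initial condition} holds with parameter $\lambda=o(1)$ by the inductive hypothesis of Lemma~\ref{lem:inductive} at step $\tau_{q-1}$: the coordinates $X/n$, $R/n$, and $C_{j,h}/n$ for $j+h=q-1$ are concentrated at $x(\sigma_{q-1}),r(\sigma_{q-1}),c_{j,h}(\sigma_{q-1})$, while for $j+h=q$ the variable $C_{j,h}(\tau_{q-1})$ is identically $0$ (a type-$(j,h)$ unsaturated vertex would need blue-degree $q$, impossible before phase $q$ begins), matching the initial values $c_{j,h}(\sigma_{q-1}):=0$ prescribed for phase $q$. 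The \emph{trend hypothesis} is exactly what is established in \eqref{eqn:degree_x_change}--\eqref{eqn:degree_c_max_change}: dividing each expected one-step increment by $n$ yields the corresponding right-hand side of \eqref{eqn:greedy_x_and_r}--\eqref{eqn:greedy_de_max_type} up to additive error $O(\log n / n) = o(1)$, valid whenever $(t/n,\mathbf{Y}(t)/n)\in\mathcal{D}_\eps$, which in particular guarantees $n-X(t)=\Omega(n)$ as those derivations require. The \emph{boundedness hypothesis} is Lemma~\ref{lem:boundedness_greedy}: one-step changes are $O(\log n)$ with failure probability $O(n^{-1})$, comfortably inside the error budget of Theorem~\ref{thm:differential_equation_method}.

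With all four ingredients in place, Theorem~\ref{thm:differential_equation_method} yields a.a.s.\ the uniform closeness $\|\mathbf{Y}(t)-n\,\mathbf{y}(t/n)\|_\infty = o(n)$ on $[\tau_{q-1},T_{\mathcal{D}_\eps}]$; since the deterministic solution $\mathbf{y}$ is contained in a compact subset of the interior of $\mathcal{D}_\eps$ on any subinterval $[\sigma_{q-1},\sigma(\eps)-\delta]$, this forces $T_{\mathcal{D}_\eps}\ge (\sigma(\eps)-\delta)n$ a.a.s., which gives the claimed concentration for every $\delta>0$. The step I expect to require the most care is the trend hypothesis: one must justify that the $O(\log n/n)$ slack absorbed in \eqref{eqn:degree_r_change}--\eqref{eqn:degree_c_max_change}---coming from the at most two coloured vertices that may fail to be well-spaced, from the $O(1/n^2)$ corrections in the Poisson-type calculations of Tables~\ref{table:red_changes}--\ref{table:type_changes_max}, and from the deferred-history conditioning on $H_t$ rather than on the full graph---genuinely fits the error budget on the domain where $1-x$ and $d$ are both bounded away from $0$. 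Once this bookkeeping is verified, the concentration statement follows as an immediate corollary.
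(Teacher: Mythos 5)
Your proposal is correct and follows essentially the same route as the paper: verify the Lipschitz condition on $\scr{D}_{\eps}$ (using that $1-x$ and $d$ are bounded below by $\eps$), take the initial condition from the inductive hypothesis with $c_{j,h}(\sigma_{q-1})=0$ for $j+h=q$, the trend hypothesis from \eqref{eqn:degree_x_change}--\eqref{eqn:degree_c_max_change}, and the boundedness hypothesis from Lemma~\ref{lem:boundedness_greedy}, then invoke Theorem~\ref{thm:differential_equation_method}. No substantive differences.
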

As $\scr{D}_{\eps}\subseteq \scr{D}_{\eps'}$ for every $\eps>\eps'$, $\sigma(\eps)$ is monotonely nondecreasing as $\eps\to 0$, and so $\sigma_{q}:=\lim_{\eps\to 0+}\sigma(\eps)$ exists. Moreover, the derivatives of the functions $x, r,$ and $c_{k_1,k_2}$
are uniformly bounded on $(\sigma_{q-1}, \sigma_q)$, which implies that the functions must be uniformly continuous. The latter condition implies that
the functions are (uniquely) continuously extendable to $[\sigma_{q-1}, \sigma_q]$, and so the following limits exist:
\begin{eqnarray}
 x(\sigma_{q})&:=&\lim_{s\to \sigma_{q}-} x(s) \label{eqn:x_limit} \\ 
 r(\sigma_{q})&:=&\lim_{s\to \sigma_{q}-} r(s)  \label{eqn:r_limit} \\ 
 c_{k_1,k_2}(\sigma_{q})&:=&\lim_{s\to \sigma_{q}-} c_{k_1,k_2}(s). \label{eqn:c_limit}
\end{eqnarray}
Random variables $|R(t)/n|$ and $|C_{k_1,k_2}(t)/n|$ for $k_1 + k_2 \in \{q,q-1\}$ are both bounded by $1$ for all $t$.
Thus, when $t/n$ approaches $\sigma_q$, $X(t)/n$ approaches $1$, or $t/n$ approaches $3$, or $D(t)/n:= \sum_{\substack{j,h :\\ j + h =q-1}} C_{j,h}(t)/n$ approaches $0$. Formally, we have the following proposition:
\begin{proposition}\label{prop:degree_boundary} For every $\eps>0$, there exists $\delta>0$ such that a.a.s.\ one of the following holds.
\begin{itemize}
    \item $D(t) <  \eps n$ for all $ t\ge (\sigma_q-\delta)n$;
    \item $X(t)>(1-\eps)n$ for all $ t\ge (\sigma_q-\delta)n$;
    \item $\sigma_q=3$.
\end{itemize}
\end{proposition}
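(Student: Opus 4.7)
The plan is to identify which boundary of $\scr{D}_{\eps}$ the ODE trajectory reaches at $s = \sigma(\eps)$, and then pass $\eps \to 0^+$ to pin down the behaviour at $\sigma_q$. First I would rule out most of the boundaries by a deterministic observation: the quantities $R(t)/n$, $C_{k_1,k_2}(t)/n$, and $D(t)/n$ all lie in $[0,1]$, since they count normalised sizes of subsets of $[n]$. By Lemma~\ref{lem:concentration_random}, for each fixed $s \in [\sigma_{q-1}, \sigma(\eps))$, these are approximated to within $o(1)$ by $r(s)$, $c_{k_1,k_2}(s)$, $d(s)$, and because the ODE solutions do not depend on $n$, passing to the limit forces $r, c_{k_1,k_2}, d \in [0,1]$ along the trajectory. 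Consequently the boundary constraints $|r| = 2$, $|c_{j,h}| = 2$, and $d = 2$ in the definition of $\scr{D}_{\eps}$ are never active, and similarly the negative sign constraints and $x \le 0$ are slack. Therefore the trajectory can only exit $\scr{D}_{\eps}$ via $s = 3$, $x = 1-\eps$, or $d = \eps$.

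Since $\sigma(\eps)$ is non-decreasing as $\eps \to 0^+$ with limit $\sigma_q$, by passing to a subsequence we may assume a single one of these three boundaries is active at $\sigma(\eps_m)$ for some sequence $\eps_m \to 0$. If it is $s = 3$, then $\sigma_q = \lim_m \sigma(\eps_m) = 3$ follows immediately, yielding the third conclusion. Otherwise, given the target $\eps > 0$ from the proposition, I would choose $\eps' \le \eps / 2$ small enough that $\sigma(\eps')$ lies within any prescribed proximity of $\sigma_q$ and the $o(n)$ concentration error in Lemma~\ref{lem:concentration_random} is far below $\eps n$. If $x(\sigma(\eps')) = 1 - \eps'$, then continuity of $x$ lets us pick $\delta > 0$ such that $x(\sigma(\eps') - \delta) > 1 - \eps$; Lemma~\ref{lem:concentration_random} yields $X((\sigma_q - \delta)n) > (1-\eps) n$ a.a.s. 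The key extra ingredient is that $X(t) = |V(P_t)|$ is non-decreasing throughout~\ref{alg:degree_greedy}, as the path never loses vertices, so the inequality $X(t) > (1-\eps) n$ propagates to \emph{every} $t \ge (\sigma_q - \delta)n$, giving the second conclusion.

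The remaining case $d(\sigma(\eps')) = \eps'$ is the main obstacle. Lemma~\ref{lem:concentration_random} gives $D(t) < \eps n$ immediately for $t$ in the window $[(\sigma_q - \delta)n, \sigma(\eps')n]$, but unlike $X$, the quantity $D(t)$ is not monotone: blue edges adjacent to an unsaturated vertex may be uncoloured whenever a neighbour is absorbed into the path (lines~\ref{line:unsaturated_uncolour} and~\ref{line:saturated_uncolour} of~\ref{alg:degree_greedy}), and such uncolourings can raise $D_{q-1}$. To extend the bound past $\sigma(\eps')n$, I would argue that since the ODE system drives $d$ strictly downwards at the exit point, the process crosses into phase $q+1$ within an additional $O(\eps' n)$ steps a.a.s.\ via a Chernoff argument on the number of extensions and augmentations per unit time. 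For $t \ge \tau_q$, every unsaturated vertex has blue degree at least $q$, so $D_{q-1}(t)$ can grow only through uncolouring events, each contributing $O(\log n)$ by Lemma~\ref{lem:boundedness_greedy}. A martingale concentration bound (e.g., Azuma--Hoeffding applied to the centred process $D(t) - \E[D(t)]$) over the window $[(\sigma_q - \delta)n, (\sigma_q + \delta')n]$, with $\delta$ and $\delta'$ chosen suitably small relative to $\eps$, then keeps $D(t)$ below $\eps n$ a.a.s., completing the proof.
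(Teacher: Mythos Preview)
Your overall architecture matches the paper's (brief) argument: the paper only records that $R(t)/n$ and $C_{k_1,k_2}(t)/n$ are bounded by $1$, so the trajectory can only leave $\scr{D}_\eps$ through $s=3$, $x=1-\eps$, or $d=\eps$, and then states the proposition without further detail. Your treatment of the first two exits ($\sigma_q=3$ and the $x$-boundary via monotonicity of $|V(P_t)|$) is fine and more explicit than what the paper writes.

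The genuine issue is your handling of the $d$-boundary. You assert that $D(t)=D_{q-1}(t)$ is not monotone because ``blue edges adjacent to an unsaturated vertex may be uncoloured whenever a neighbour is absorbed''. This is false in \ref{alg:degree_greedy}. Every coloured edge has exactly one endpoint on $P_t$ (a blue, red, or magenta vertex) and one endpoint in $U_t$. The only uncolouring in the algorithm occurs at lines~\ref{line:unsaturated_uncolour} and~\ref{line:saturated_uncolour}, and in both cases the uncoloured edges are precisely those incident to the \emph{absorbed} vertex $y\in U_t$. A different unsaturated vertex $z$ has all of its blue edges of the form $xz$ with $x\in V(P_t)$; none of these are incident to $y$, so none are uncoloured. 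Hence the blue degree of every (surviving) unsaturated vertex is non-decreasing throughout the run of \ref{alg:degree_greedy}, and $D_{q-1}(t)$ is non-increasing in $t$ (indeed, $D_{q-1}(t)=0$ for all $t\ge\tau_q$).

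With this monotonicity in hand, the $d$-boundary case is handled exactly as you handled the $x$-boundary: choose $\eps'\le\eps/2$ with $\sigma(\eps')$ close to $\sigma_q$, apply Lemma~\ref{lem:concentration_random} at some $t^\star\le\sigma(\eps')n$ to get $D(t^\star)<\eps n$ a.a.s., and then propagate by monotonicity to all $t\ge t^\star$. Your Chernoff/Azuma workaround is therefore unnecessary, and as written it rests on a mistaken premise (and is also rather vague about why $d'$ is strictly negative at the exit point and how the error terms are controlled past $\sigma(\eps')n$).
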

The ordinary differential equations \eqref{eqn:greedy_x_and_r}-\eqref{eqn:greedy_de_max_type} again do not have an analytical solution. However, numerical solutions show that $\sigma_q < 3$, and $x(\sigma_q) < 1$.  Thus, after executing \ref{alg:degree_greedy}
for $t = \sigma_{q} n + o(n)$ steps, there are $D(t) < \eps n$ vertices of type $q-1$ remaining for some $\eps = o(1)$.
At this point, by observing the numerical solution (\ref{eqn:x_limit})--(\ref{eqn:c_limit}) at $\sigma_q$, we know that there exists some absolute constant $0 < p < 1$ such that $(X(t) - 5 L(t))/n \ge p$,
where we recall that $L(t)$ counts the total number of coloured vertices at time $t$. Hence, at each step, some vertex of type $q-1$ becomes of type $q$ with probability at least $p$. Thus, by applying Chernoff's bound,
one can show that a.a.s.\ after another $O(\eps n/p)=o(n)$ rounds, all vertices of type $q-1$ are destroyed. It follows that a.a.s.\ $|\tau_{q}/n - \sigma_q| =o(1)$, and so Lemma \ref{lem:inductive} is proven.
\section{Proof of Lemma~\ref{lem:f}}
\begin{lemma} \label{lem:structures} Suppose ${\mathcal S}$ is $\mu$-well-behaved up until time $t = \Theta(n)$. A.a.s.\ the following holds for $G^{{\mathcal S}}_{t}$: 
\begin{align}
 |\W_1|&\sim n \sum_{i\le t} \frac{1}{n} \left(1-\frac{1}{n}\right)^{t} \sum_{i\le j_1<j_2\le t} \frac{1}{n^2}\left(1-\frac{1}{n}\right)^{j_2}\label{W1} \displaybreak[0] \\
|\W_2|&\sim n \sum_{i_1\le i_2\le t} \frac{1}{n^2}\left(1-\frac{1}{n}\right)^{t} \left(\sum_{i_1\le j_1<j_2\le t} \frac{1}{n^2}\left(1-\frac{1}{n}\right)^{j_2} + \sum_{i_2<j_1<j_2\le t} \frac{1}{n^2}\left(1-\frac{1}{n}\right)^{j_2}\right) \label{W2} \\
 |\T_1|&\sim n \sum_{i\le t} \frac{1}{n} \left(1-\frac{1}{n}\right)^{t} \sum_{i\le j_1<j_2\le t} \frac{1}{n^2}\left(1-\frac{1}{n}\right)^{t}\nonumber\\
 &\times\left(\sum_{j_1\le h_1<h_2\le t} \frac{1}{n^2}\left(1-\frac{1}{n}\right)^{h_2} + \sum_{j_2<h_1<h_2\le t} \frac{1}{n^2}\left(1-\frac{1}{n}\right)^{h_2}\right)\label{T1} \\
 |\T_2|&\sim n \sum_{i_1<i_2\le t} \frac{1}{n^2} \left(1-\frac{1}{n}\right)^{t} \sum_{i_1\le j_1<j_2\le t} \frac{1}{n^2}\left(1-\frac{1}{n}\right)^{t}\nonumber\\
 &\times\left(\sum_{j_1\le h_1<h_2\le t} \frac{1}{n^2}\left(1-\frac{1}{n}\right)^{h_2} + \sum_{j_2<h_1<h_2\le t} \frac{1}{n^2}\left(1-\frac{1}{n}\right)^{h_2}\right)\nonumber\\
 & + \sum_{i_1<i_2\le t} \frac{1}{n^2} \left(1-\frac{1}{n}\right)^{t} \sum_{i_2\le j_1<j_2\le t} \frac{1}{n^2}\left(1-\frac{1}{n}\right)^{t}\nonumber\\
 &\times\left(\sum_{j_1\le h_1<h_2\le t} \frac{1}{n^2}\left(1-\frac{1}{n}\right)^{h_2} + \sum_{j_2<h_1<h_2\le t} \frac{1}{n^2}\left(1-\frac{1}{n}\right)^{h_2}\right).\label{T2}
\end{align}
\end{lemma}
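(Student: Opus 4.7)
The plan is to compute the expectations $\E[|\W_1|]$, $\E[|\W_2|]$, $\E[|\T_1|]$ and $\E[|\T_2|]$ using the i.i.d.\ distribution of the squares $u_1,\dots,u_t$ (independent of the strategy's internal randomness), and then to establish concentration using that $\scr{S}$ is $\mu$-well-behaved. First I would express each cardinality as a sum of indicators indexed by step-tuples, exploiting that for each pair $(x,y)\in\W_1$ the step $i$ with $u_i=x$ is unique: this gives a bijection between $\W_1$ and the set of $i\in[1,t]$ at which $u_i$ receives exactly one square in $[1,t]$ and $v_i$ receives at least two squares in $(i,t]$, so $|\W_1|=\sum_{i=1}^{t}\bm{1}_{E_i}$ with $E_i$ the corresponding conjoint event. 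Analogously $|\W_2|$ decomposes as a sum over ordered pairs $(i_1,i_2)$ with $i_1<i_2$ (the two squares on the leading vertex), and $|\T_1|, |\T_2|$ as sums over step $6$-tuples with the linking constraint $v_i=u_{i'}$ built in.

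Second, I would compute $\Prob[E_i]$ and its analogues by conditioning on the history $H_i=(u_1,v_1,\ldots,u_i,v_i)$. Since the tail $u_{i+1},\ldots,u_t$ is i.i.d.\ uniform on $[n]$ and independent of $H_i$, conditional on $u_i=x$ and $v_i=y$ with $x\neq y$ the probability of the future event ``$y$ receives at least two squares in $(i,t]$ and $u_i$ receives none'' depends only on $i$ by the symmetry of $[n]\setminus\{x\}$. Expanding this future event via the ``first two squares on $y$ after step $i$'' decomposition produces terms of the form $\tfrac{1}{n^2}(1-1/n)^{j_2-i-2}$ over $i<j_1<j_2\le t$; combining with the past factor $(1-1/n)^{i-1}$ and the base weights $1/n$ for $u_i$ and $v_i$, summing over $(x,y)$, and absorbing all $(1-1/n)^{O(1)}$ corrections into the $\sim$, the outer/inner sum structure of~\eqref{W1} follows by a standard Riemann-sum and Poisson-approximation argument. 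The double inner sums in~\eqref{W2}--\eqref{T2} arise similarly, with the two inner summations corresponding to the two interleavings of the square pair $(j_1,j_2)$ on the intermediate vertex relative to the pair $(i_1,i_2)$ on the leading vertex --- namely ``$j_1$ between $i_1$ and $i_2$'' versus ``$j_1$ beyond $i_2$'' --- and for $\T_1,\T_2$ an additional layer encoding the squares on the third vertex. The $\mu$-well-behavedness enters mildly, to guarantee that the anomalous configurations (steps with $v_i=u_i$, or $v_i$ coinciding with a vertex already carrying many squares) contribute in aggregate only $o(n)$ to the expectations, since each vertex receives at most $\mu=\sqrt{n}$ circles and the expected count of such steps is therefore negligible at the $\Theta(n)$ scale.

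Finally, to pass from expectation to an a.a.s.\ statement I would apply the Azuma--Hoeffding inequality to the Doob martingale with respect to the filtration generated by $(u_1,v_1,\ldots,u_t,v_t)$. Resampling the pair $(u_i,v_i)$ at a single step changes $|\W_1|$ by $O(\mu)$, because well-behavedness bounds the number of later steps $i'$ whose indicator can depend on vertex $u_i$ or $v_i$ through the circle count; the other three cardinalities change by $O(\mu^c)$ for a small absolute constant $c$ depending on the number of ``linking'' positions in the tuple. With $\mu=\sqrt{n}$ and $t=O(n)$ this yields deviations of order $n^{3/4+o(1)}$, which is $o(n)$ and negligible compared to the $\Theta(n)$ magnitudes of the expectations; combining this with the first-moment computation gives the claimed a.a.s.\ asymptotic equalities. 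The main obstacle I anticipate is the first-moment computation of $|\T_1|$ and $|\T_2|$: organising the case analysis for how the three pairs $(i_1,i_2),(j_1,j_2),(h_1,h_2)$ can interleave on $[1,t]$ so that each ordered configuration contributes to precisely one of the inner sums, and verifying that the residual cross-interactions are absorbed into the $o(n)$ error via the $\mu$ bound, is the most delicate book-keeping in the argument.
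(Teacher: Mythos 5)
Your first-moment computation is essentially the paper's: you sum indicators over step-tuples, use that the future squares are i.i.d.\ uniform and independent of the strategy's choice of circle, and organise the interleavings of the square-pairs exactly as the two inner sums in \eqref{W2}--\eqref{T2}. That part is fine. Where you diverge is concentration: the paper uses the second moment method (showing $\E|\W_1|^2\sim(\E|\W_1|)^2$, with the overlapping pairs $y_1=y_2$ contributing only $O(\mu n)=o(n^2)$ by well-behavedness), whereas you propose Azuma--Hoeffding on the Doob martingale with a bounded-differences bound of $O(\mu)$ per step. This substitution contains a genuine gap, in fact two.

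First, the coordinates $(u_1,v_1,\ldots,u_t,v_t)$ are not independent: each $v_j$ is an adaptive function of the entire history. Your $O(\mu)$ bound is a one-coordinate Lipschitz bound for $|\W_1|$ \emph{with all other coordinates held fixed}; it is not a bound on the martingale differences $|\E[|\W_1|\mid\scr{F}_i]-\E[|\W_1|\mid\scr{F}_{i-1}]|$. Changing $(u_i,v_i)$ changes the conditional law of \emph{all} subsequent circles $v_{i+1},\ldots,v_t$, and an adversarial strategy can redirect $\Theta(n)$ of them (e.g.\ onto vertices that will or will not receive two further squares), so the realized change in $|\W_1|$ under any naive coupling of the two futures can be $\Theta(n)$, not $O(\mu)$. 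To rescue the martingale route you would have to show that $\E[|\W_1|\mid\scr{F}_i]$ depends on $\scr{F}_i$ only through quantities (square counts, circle counts on the relevant vertices) that move by $O(\mu)$ per step --- which amounts to redoing the conditional first-moment computation, and is precisely the work your sketch omits. Second, even granting differences of $O(\mu)$ with $\mu=\sqrt{n}$, Azuma gives deviations of order $\mu\sqrt{t}=\sqrt{n}\cdot\sqrt{n}=n$ (up to logarithms), not $n^{3/4+o(1)}$: the correct exponent is $\sqrt{\sum_i c_i^2}=\sqrt{t\mu^2}$, not $\sqrt{t\mu}$. So as stated the bound is vacuous at the $\Theta(n)$ scale; you would need to re-run the reduction to well-behaved strategies with $\mu=o(\sqrt{n})$ for this to have any chance. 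The paper's second moment argument avoids both problems and works for any $\mu=o(n)$.
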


\begin{proof} 
We prove~(\ref{W1}) and briefly explain the expressions in~(\ref{W2})--(\ref{T2}) whose proofs are similar to that of~(\ref{W1}). Fix a vertex $x \in[n]$ and a square $u_i$ for $i\le t$. The probability that $u_i$ lands on $x$ in step $i$ is $1/n$. Condition on this event. The probability that $x$ receives no squares in any steps other than $i$ is $(1-1/n)^{t-1}\sim (1-1/n)^{t}$. Let $y$ be the vertex which the strategy chooses to pair with $u_i$ with. Fix any two integers $i<j_1<j_2\le t$, the probability that $y$ receives its
first two squares at times $j_1$ and $j_2$ is $n^{-2}(1-1/n)^{j_2-2}\sim n^{-2}(1-1/n)^{j_2}$. Summing over all possible values of $i,j_1,j_2$ and multiplying by $n$, the number of choices for $x$, gives $\ex |\W_1|$.

For concentration of $|\W_1|$ we prove that $\ex |\W_1|^2\sim (\ex |\W_1|)^2$. For any pair of $((x_1,y_1),(x_2,y_2))$ in $ \W_1\times \W_1$, either $x_1,y_1,x_2,y_2$ are pairwise distinct, or $y_1=y_2$. It is easy to see that the expected number of pairs where $x_1,y_1,x_2,y_2$ are pairwise distinct is 
\[
n^2 \sum_{\substack{i_1\le t\\ i_2\le t}} \frac{1}{n^2} \left(1-\frac{1}{n}\right)^{2(t-1)} \sum_{\substack{i_1\le j_1<j_2\le t\\
i_2\le h_1<h_2\le t
}} \frac{1}{n^4}\left(1-\frac{1}{n}\right)^{j_2-2+h_2-2} \sim (\ex|\W_1|)^2.
\]
The expected number of pairs where $y_1=y_2$ is at most $\mu n$ as there are most $n$ choices for $x_1$ and given $(x_1,y_1)$, there can be at most $\mu$ choices for $(x_2,y_2)$ since ${\mathcal S}$ is $\mu$-well-behaved. Since $\mu=o(n)$, $\mu n=o(n^2)$ which is $o((\ex|\W_1|)^2)$. Thus we have verified that $\ex |\W_1|^2\sim (\ex|\W_1|)^2$ and thus by the second moment method, a.a.s.\ $|\W_1|\sim \ex|\W_1|$. 

The proofs for the expectation and concentration of $|\W_2|$, $|\T_1|$ and $|\T_2|$ are similar. We briefly explain the expressions in~(\ref{W2})--(\ref{T2}):

In~(\ref{W2}), $i_1$ and $i_2$ denote the two steps at which $x$ receives a square. Since there are two squares on $x$, there are two choices of circles, namely $v_{i_1}$ and $v_{i_2}$. The two summations over $(j_1,j_2)$ accounts for the two choices of $v_{i_1}$ and $v_{i_2}$, depending on which
is to be covered by two squares. Thus, $j_1$ and $j_2$ denote the steps where the first two squares on $v_{i_1}$ or $v_{i_2}$ arrive. 

In~(\ref{T1}), $i$ denotes the step where $x_1$ receives its only square; $j_1$ and $j_2$ denote the two steps where $y_1=x_2$ receives its two squares. Hence, there are two choices for $y_2$, and $h_1$ and $h_2$ denote the two steps of the first two squares $y_2$ receives.

In~(\ref{T2}), $i_1$ and $i_2$ denote the two steps where $x_1$ receives its two squares---hence there are two choices for $y_1$. Integers $j_1$ and $j_2$ denote the two steps where $y_1=x_1$ receives its two squares---hence there are two choices for $y_2$. Finally, $h_1$ and $h_2$ denote the steps where $y_2$ receives its first two squares. 
\end{proof}

From Lemma~\ref{lem:structures}, we deduce that for $t = sn$,
\begin{align*}
\displaybreak[0]
|\W_1|&\sim n e^{-s}\int_{0}^s dx\int_{x}^s dy_1 \int_{y_1}^s e^{-y_2} dy_2 = n e^{-s}\left(1-\frac{e^{-s}s^2}{2}-e^{-s}s-e^{-s}\right) \\
|\W_2|&\sim  ne^{-s}\int_{0}^s d x_1 \int_{x_1}^s d x_2 \left(\int_{x_1}^s d y_1 \int_{y_1}^s e^{-y_2} d y_2 + \int_{x_2}^s d y_1 \int_{y_1}^s e^{-y_2} d y_2  \right)\\
&= n e^{-s} \left(s-e^{-s}s^2 - \frac{e^{-s}s^3}{2} - e^{-s}s\right) \\
|\T_1|&\sim n e^{-2s} \int_{0}^s dx \int_{x}^s dy_1 \int_{y_1}^s d y_2 \left(\int_{y_1}^s d z_1 \int_{z_1}^s e^{-z_2} d z_2 + \int_{y_2}^s d z_1 \int_{z_1}^s e^{-z_2} d z_2 \right) \\
&= n e^{-2s}\left(-1+s - \frac{e^{-s}s^3}{3}-\frac{e^{-s}s^2}{2}-\frac{e^{-s}s^4}{8}+e^s\right)\\
|\T_2|&\sim n e^{-2s} \int_{0}^s dx_1 \int_{x_1}^s dx_2 \int_{x_1}^s dy_1 \int_{y_1}^s d y_2 \left(\int_{y_1}^s d z_1 \int_{z_1}^s e^{-z_2} d z_2 + \int_{y_2}^s d z_1 \int_{z_1}^t e^{-z_2} d z_2 \right)\\
&+n e^{-2s} \int_{0}^s dx_1 \int_{x_1}^s dx_2 \int_{x_2}^s dy_1 \int_{y_1}^s d y_2 \left(\int_{y_1}^s d z_1 \int_{z_1}^s e^{-z_2} d z_2 + \int_{y_2}^s d z_1 \int_{z_1}^s e^{-z_2} d z_2 \right)\\
&= n e^{-2s}\left(-s+s^2-e^{-s}s\left(\frac{s^4}{8}+\frac{s^3}{3}+\frac{s^2}{2}-1\right)\right)
\end{align*}
It follows now that
$Z-|\W_1|-|\W_2|+W \sim f(s) n$ where recall that
\[
f(s)=2+e^{-3s}(s+1)\left(1-\frac{s^2}{2}-\frac{s^3}{3}-\frac{s^4}{8}\right)+e^{-2s}\left(2s+\frac{5s^2}{2}+\frac{s^3}{2}\right)-e^{-s}\left(3+2s\right).
\]

\section{The Differential Equation Method}
In this section, we provide a self-contained \textit{non-asymptotic} statement of the differential equation method. 
The statement combines \cite[Theorem $2$]{warnke2019wormalds}, and its extension \cite[Lemma $9$]{warnke2019wormalds}, in a form convenient for our purposes, where we modify the notation of \cite{warnke2019wormalds} slightly. In particular, we rewrite \cite[Lemma $9$]{warnke2019wormalds} in
a less general form in terms of a stopping time $T$. We need only check the `Boundedness Hypothesis' (see below) for $0 \le t \le T$,
which is exactly the setting of Lemmas \ref{lem:lipschitz_randomized} and \ref{lem:boundedness_greedy}.

Suppose we are given integers $a,n \ge 1$, a bounded domain $\scr{D} \subseteq \mb{R}^{a+1}$,
and functions $(F_k)_{1 \le k \le a}$ where each $F_k: \scr{D} \to \mb{R}$ is $L$-Lipschitz-continuous on $\scr{D}$
for $L \ge 0$. Moreover, suppose that $R \in [1, \infty)$ and $S \in (0, \infty)$ are \textit{any} constants which satisfy $\max_{1 \le k \le a} |F_{k}(x)| \le R$ for all $x=(s,y_1,\ldots ,y_{a})\in \scr{D}$ and $0 \le s \le S$.
\begin{theorem}[Differential Equation Method, \cite{warnke2019wormalds}] \label{thm:differential_equation_method}
Suppose we are given $\sigma$-fields $\scr{F}_{0}  \subseteq \scr{F}_{1} \subseteq \cdots$,
and for each $t \ge 0$, random variables $((Y_{k}(t))_{1 \le k \le a}$ which are $\scr{F}_t$-measurable. Define $T_{\scr{D}}$ to be the minimum $t \ge 0$ such that
\[
 (t/n, Y_{1}(t)/n, \ldots , Y_{k}(t)/n) \notin \scr{D}.
\]
Let $T \ge 0$ be an (arbitrary) stopping time\footnote{The stopping time $T\ge 0$ is \textbf{adapted} to $(\scr{F}_t)_{t \ge 0}$, provided
the event $\{\tau = t\}$ is $\scr{F}_t$-measurable for each $t \ge 0$.} adapted to $(\scr{F}_t)_{t \ge 0}$, and
assume that the following conditions hold for $\delta, \beta, \gamma \ge 0$ and
$\lambda \ge \delta \min\{S, L^{-1}\} + R/n$:
\begin{enumerate}
    \item[(i)] The `Initial Condition': For some $(0,\hat{y}_1,\ldots ,\hat{y}_a) \in \scr{D}$, \label{enum:initial_conditions}
    \[
    \max_{1 \le k \le a} |Y_{k}(0) - \hat{y}_k n| \le \lambda n.
    \] 
    \item[(ii)] The `Trend Hypothesis': For each  $t \le \min\{ T, T_{\scr{D}} -1\}$, \label{enum:trend_hypothesis}
    $$|\mb{E}[ Y_{k}(t+1) - Y_{k}(t) \mid \scr{F}_t] - F_{k}(t/n,Y_{1}(t)/n,\ldots ,Y_{a}(t)/n)| \le \delta.$$
    \item[(iii)] The `Boundedness Hypothesis': With probability $1 - \gamma$, \label{enum:boundedness_hypothesis}
    $$|Y_{k}(t+1) -  Y_{k}(t)| \le \beta,$$
    for each $t \le \min\{ T, T_{\scr{D}} -1\}$:
\end{enumerate}
Then, with probability at least $1 - 2a \exp\left(\frac{-n \lambda^2}{8 S \beta^2}\right) - \gamma$, we have that
\begin{equation}
    \max_{0 \le t \le \min\{T, \sigma n\}} \max_{1 \le k \le a} |Y_{k}(t) -y_{k}(t/n) n| < 3 \lambda \exp(L S)n,
\end{equation}
where $(y_{k}(s))_{1 \le k \le a}$ is the unique solution to the system of differential equations
\begin{equation} \label{eqn:general_de_system}
    y_{k}'(s) = F_{k}(s, y_{1}(s),\ldots ,y_{a}(s)) \quad \mbox{with $y_{k}(0) = \hat{y}_k$ for $1 \le k \le a$,}
\end{equation}
and $\sigma = \sigma(\hat{y}_1,\ldots ,\hat{y}_a) \in [0,S]$ is any choice of $\sigma \ge 0$ with the property that
$(s,y_{1}(s),\ldots, y_{a}(s))$ has $\ell^{\infty}$-distance at least $3 \lambda \exp(LS)$ from the boundary of $\scr{D}$
for all $s \in [0, \sigma)$.
\begin{remark}
Standard results for differential equations guarantee that \eqref{eqn:general_de_system} has a unique solution $(y_{k}(s))_{1\le k \le a}$ which extends arbitrarily close to the boundary of $\scr{D}$. 
\end{remark}


\end{theorem}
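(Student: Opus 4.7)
The plan is to bound the deviation $E_k(t) := Y_k(t) - y_k(t/n)\, n$ uniformly over $0 \le t \le \min\{T, \sigma n\}$ and $1 \le k \le a$, by decomposing it into a martingale fluctuation piece and a drift-comparison piece, and controlling each separately. First I would stop the process at the time $\tau := \min\{T, T_{\scr{D}}-1, \lfloor \sigma n \rfloor\}$ so that all three hypotheses are in force on the entire range being analysed, and then work with the stopped sequence $Y_k(t \wedge \tau)$; this sidesteps any issue with the trend/boundedness bounds failing once $(t/n, Y/n)$ exits $\scr{D}$, while still giving the desired conclusion because the bound we wish to prove is automatic once $t > \tau$ (either $t > T$, or $(s, y(s))$ has already left the deformed domain, which by the choice of $\sigma$ cannot happen before $s = \sigma$).

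Next I would write, for each coordinate $k$, the telescoping identity
\begin{equation*}
 Y_k(t) - Y_k(0) \;=\; \sum_{i=0}^{t-1} \bigl(Y_k(i+1)-Y_k(i) - \mb{E}[Y_k(i+1)-Y_k(i)\mid \scr{F}_i]\bigr) \;+\; \sum_{i=0}^{t-1} \mb{E}[Y_k(i+1)-Y_k(i)\mid \scr{F}_i].
\end{equation*}
The first sum $M_k(t)$ is a martingale with increments bounded by $2\beta$ on the event of total probability $1-\gamma$ guaranteed by the Boundedness Hypothesis, so by the Azuma--Hoeffding inequality, for each $k$ and each fixed $t \le Sn$,
\begin{equation*}
 \Prob\bigl[|M_k(t)| > \lambda n\bigr] \;\le\; 2 \exp\!\Bigl(-\frac{\lambda^2 n^2}{2 \cdot S n \cdot (2\beta)^2}\Bigr) \;\le\; 2\exp\!\Bigl(-\frac{n\lambda^2}{8 S \beta^2}\Bigr),
\end{equation*}
which after a maximal-inequality or union-bound refinement (the standard trick: apply Azuma to the stopped martingale $M_k(t \wedge \tau)$) yields the same tail bound uniformly in $t \le \min\{T,\sigma n\}$. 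A union bound over the $a$ coordinates then produces the $2a \exp(-n\lambda^2/(8 S \beta^2))$ failure probability in the statement, which together with the $\gamma$ failure probability of the Boundedness Hypothesis accounts for the exceptional set.

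For the second, deterministic-looking sum, I would use the Trend Hypothesis to replace each conditional expectation by $F_k(i/n, Y_1(i)/n, \ldots, Y_a(i)/n)$ up to an additive $\delta$, giving
\begin{equation*}
 Y_k(t) \;=\; Y_k(0) + M_k(t) + \sum_{i=0}^{t-1} F_k\!\bigl(i/n, Y(i)/n\bigr) + O(\delta t).
\end{equation*}
Writing the corresponding Riemann-sum expression for $n\, y_k(t/n) = n\, \hat y_k + n\int_0^{t/n} F_k(s, y(s))\, ds$ and subtracting, the $L$-Lipschitz property of $F_k$ on $\scr{D}$ translates the difference into the key discrete recursion
\begin{equation*}
 \max_k |E_k(t)| \;\le\; \max_k |E_k(0)| + \max_k |M_k(t)| + R + \delta t + \frac{L}{n} \sum_{i=0}^{t-1} \max_k |E_k(i)|,
\end{equation*}
where the $R$ absorbs the Riemann-versus-integral mismatch from the boundedness of $F_k$ and the $\delta t$ absorbs the cumulative trend error. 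A discrete Gronwall inequality, applied on $[0,\min\{T,\sigma n\}]$, then converts this into the bound $\max_k |E_k(t)| \le (\lambda n + \lambda n + R + \delta S n) \exp(L S) \le 3\lambda \exp(LS)\, n$, using the hypothesis $\lambda \ge \delta \min\{S, L^{-1}\} + R/n$ to clean up the additive terms.

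The main obstacle is the careful bookkeeping at the boundary of $\scr{D}$ and around the stopping time $T$: one must argue that, with the stated probability, the process does not prematurely exit $\scr{D}$ before time $\sigma n \wedge T$, for otherwise the Lipschitz bound on $F_k$ (hence the discrete Gronwall step) is not justified. This is handled by choosing $\sigma$ so that the deterministic trajectory stays at $\ell^\infty$-distance at least $3\lambda \exp(LS)$ from $\partial \scr{D}$, and then observing inductively that on the good event the deviation bound $|E_k(t)| < 3\lambda \exp(LS)\, n$ forces $(t/n, Y(t)/n)$ to remain inside $\scr{D}$, so $T_{\scr{D}} > \sigma n$ on that event and the whole argument closes without circularity. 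The remainder of the proof is bookkeeping, and full technical details are given in~\cite{warnke2019wormalds}.
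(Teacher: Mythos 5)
The paper does not prove this theorem at all --- it is imported verbatim as a black box from Warnke's exposition of Wormald's method (combining Theorem 2 and Lemma 9 of \cite{warnke2019wormalds}), so there is no in-paper proof to compare against. Your sketch correctly reproduces the standard argument underlying that citation (martingale/drift decomposition, Azuma--Hoeffding with the stated constants, Lipschitz comparison plus discrete Gronwall, and the inductive non-exit argument at the boundary of $\scr{D}$), with the only glossed-over point being the usual coupling/modification needed because the Boundedness Hypothesis holds only with probability $1-\gamma$ rather than almost surely --- which is precisely what Warnke's Lemma 9 handles and which you appropriately defer to the reference.
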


\end{document}